\newtheorem{Theorem}{Theorem}[section]
\newtheorem{Proposition}[Theorem]{Proposition}
\newtheorem{Lemma}[Theorem]{Lemma}
\newtheorem{Corollary}[Theorem]{Corollary}
\theoremstyle{definition}
\newtheorem{Definition}[Theorem]{Definition}
\newtheorem{Remark}[Theorem]{Remark}
\newcommand{\softd}{{\leavevmode\setbox1=\hbox{d}%
		\hbox to 1.05\wd1{d\kern-0.4ex{\char039}\hss}}}
\DeclarePairedDelimiter{\norm}{\lVert}{\rVert}
\newcommand{\ep}{\varepsilon}
\newcommand{\vme}{\vm_\ep}
\newcommand{\tvE}{\widetilde{E}}
\newcommand{\tvm}{\widetilde{\bm{m}}}
\newcommand{\bfphi}{\boldsymbol{\varphi}}
\newcommand{\Ov}[1]{\overline{#1}}
\newcommand{\vr}{\varrho}
\newcommand{\vre}{\vr_\ep}
\newcommand{\wtilde}{\widetilde}
\newcommand{\vue}{\vu_\ep}
\newcommand{\tvr}{\wtilde \vr}
\newcommand{\vt}{\vartheta}
\newcommand{\vu}{\bm{u}}
\newcommand{\vm}{\bm{m}}
\newcommand{\vn}{\bm{n}}
\newcommand{\Div}{{\rm div}_x}
\newcommand{\Grad}{\nabla_x}
\newcommand{\dx}{\,{\rm d} {x}}
\newcommand{\dt}{\,{\rm d} t }
\newcommand{\vU}{\bm{U}}
\newcommand{\intO}[1]{\int_{\Omega} #1 \ \dx}
\newcommand{\D}{{\rm d}}
\newcommand{\br}{ \notag \\ }
\newcommand{\jump}[1]{[[ #1 ]]}
\newcommand{\avs}[1]{\{\{ #1\}\}}
\begin{document}

%%%%%%%%%%%%%%%%%%%%%%%%%%%%%%%%

\title{\bf Oscillatory approximations and maximum entropy principle
for the Euler system of gas dynamics}

\author{Eduard Feireisl
	\thanks{The work of E.F. was partially supported by the
		Czech Sciences Foundation (GA\v CR), Grant Agreement
		24--11034S. The Institute of Mathematics of the Academy of Sciences of
		the Czech Republic is supported by RVO:67985840.
		E.F. is a member of the Ne\v cas Center for Mathematical Modelling and Mercator Fellow in SPP 2410 ``Hyperbolic Balance Laws: Complexity, Scales and Randomness".}
	\and
M\' aria Luk\'a\v{c}ov\'a-Medvi\softd ov\'a
\thanks{The work of  M.L.-M. was supported by the Gutenberg Research College and by
		the Deutsche Forschungsgemeinschaft (DFG, German Research Foundation) -- project number 233630050 -- TRR 146 and
		project number 525800857 -- SPP 2410 ``Hyperbolic Balance Laws: Complexity, Scales and Randomness".
		She is also grateful to  the  Mainz Institute of Multiscale Modelling  for supporting her research. C.~Y. was supported by
the DFG project number 525800857 -- SPP 2410 ``Hyperbolic Balance Laws: Complexity, Scales and Randomness".}  
\and Changsheng Yu$^\dagger$}

\date{}

\maketitle

\centerline{$^*$Institute of Mathematics of the Academy of Sciences of the Czech Republic}
\centerline{\v Zitn\' a 25, CZ-115 67 Praha 1, Czech Republic}
\centerline{feireisl@math.cas.cz}

\medskip

\centerline{$^\dagger$Institute of Mathematics, Johannes Gutenberg-University Mainz}
\centerline{Staudingerweg 9, 55 128 Mainz, Germany}
\centerline{\{lukacova, {chayu}\}@uni-mainz.de}

\begin{abstract}
	
We show that the measure--valued solutions of the Euler system of gas dynamics generated by oscillatory sequences of consistent approximations violate the principle of
maximal entropy production formulated by Dafermos.	Numerical results illustrate that solutions obtained by standard numerical methods may be oscillatory and thus do not comply with the Dafermos criterion.

\end{abstract}

%\bigskip

{\small

\noindent
{\bf 2020 Mathematics Subject Classification:}{ 35 Q 31
(primary); 35 D 30, 35 A 02
(secondary) }

\medbreak
\noindent {\bf Keywords:} Euler system, measure--valued solutions, consistent approximation, maximal entropy production, admissibility

\newpage
\tableofcontents

}
\section{Introduction}
\label{i}

Our goal is to show that the measure valued solutions of the Euler system of gas dynamics generated by oscillatory sequences of consistent approximations violate the principle of maximal entropy production in the sense of Dafermos
\cite{Dafer}. The result is at odds with a seemingly correct argument that oscillations enhance the mechanical energy dissipation thus increasing the entropy production. The main ingredients of our approach include:
\begin{enumerate}
	\item A proper identification of weak limits of consistent approximations --
	the dissipative measure-valued (DMV) solutions introduced in \cite{BreFei17A}, \cite{FeLMMiSh}.
	\item A rigorous verification of the statement that weakly converging sequences of consistent approximations cannot approach a weak solution of the Euler system, cf. \cite{MarEd}, \cite{FeiHof22}.
	\item
	A DMV solution of the Euler system that complies with Dafermos' admissibility
	criteria is necessarily a weak solution. Moreover, the same holds true in a smaller class of computable DMV solutions that can be identified with limits
	of consistent approximations.
\end{enumerate}	
The present paper addresses mostly the item 3 in the above list, together with some related problems concerning the amplitude of oscillatory solutions. A striking consequence
of our results is that the following so-far well accepted admissibility criteria
for the Euler system, namely:
\begin{itemize}
	\item admissible solutions should maximize the entropy production rate,
	see Dafermos \cite{Dafer}, DiPerna \cite{DiP2}, DeLellis and Sz\' ekelyhidi \cite{DelSze3}, but also negative examples concerning the isentropic Euler system
	by Chiodaroli and Kreml \cite{ChiKre}, Klein \cite{Klein2022}, Markfelder \cite{Mark2024},
	\item admissible solutions should be limits of consistent approximations (e.g. vanishing viscosity--dissipation limit, numerical approximations),
	see Constantin \cite{Const1}, Nussenzveig--Lopes et al \cite{NLSeWi},
\end{itemize}	
are \emph{not compatible} as long as the consistent approximations exhibit oscillations and/or concentrations.

\subsection{Euler system of gas dynamics}
\label{E}

The motion of a gas contained in a bounded domain $\Omega \subset R^d$, $d=1,2,3$ can be described by the
\emph{Euler system} of gas dynamics:
\begin{align}
	\partial_t \vr + \Div \vm &= 0, \label{E1}\\
	\partial_t \vm + \Div \bigg( \frac{\vm \otimes \vm}{\vr} \bigg) +
	\Grad p &= 0, \label{E2} \\
	\partial_t E + \Div \bigg[ \Big( E + p \Big) \frac{\vm}{\vr} \bigg] &= 0.
	\label{E3}	
\end{align}
Here $\vr = \vr(t,x)$ is the mass density, $\vm = \vm(t,x) = \vr \vu (t,x)$ the momentum with the velocity $\vu$,
$p$ the pressure, and $E$ the energy. In addition, we impose the impermeability condition
\begin{equation} \label{E4}
	\vm \cdot \bm{n}|_{\partial \Omega} = 0,\ \bm{n} \ \mbox{the outer normal vector to}\ \partial \Omega.
\end{equation}

The energy associated to the system takes the form
\[
E = \frac{1}{2} \frac{|\vm|^2}{\vr} + \vr e,
\]
where the internal energy $e$ as well as the pressure $p$ are determined constitutively by means of equations of state (EOS).
We suppose that $e$ and $p$ are interrelated through
\emph{Gibbs' law}
\begin{equation} \label{E5}
	\vt D s = De + p D \bigg( \frac{1}{\vr} \bigg),
\end{equation}	
where $s$ is a new state variable -- the entropy. Moreover, we assume \emph{thermodynamic stability hypothesis},
meaning the energy
\[
E = \frac{1}{2} \frac{|\vm|^2}{\vr} + \vr e (\vr, S),\
S = \vr s
\]
is a strictly convex function of the conservative-entropy variables $(\vr, \vm, S)$, cf. \cite[Chapter 4, Section 4.1.6]{FeLMMiSh}.

As is well known, the Euler system inherits the difficulties pertinent to general systems of non-linear conservation laws. In particular, smooth solutions
emanating from smooth initial data exist only on a possibly short time interval, whereas singularities in the form of shock waves appear in a finite time.
The shock type singularities can be accommodated by considering a larger class of weak solutions, however, the prize to pay is the lack of uniqueness,
cf. the standard reference monographs by Benzoni--Gavage, Serre \cite{BenSer}, Dafermos \cite{D4a}, or Smoller \cite{SMO}.

To recover well posedness,
extra \emph{admissibility conditions} inspired by the physical background of gas dynamics can be appended to the weak formulation of the problem. Notably,
the Second law of thermodynamics can be incorporated in the form of \emph{entropy inequality}
\begin{equation} \label{E8a}
	\partial_t S + \Div \bigg( S \frac{\vm}{\vr} \bigg)  \geq 0.
\end{equation}
In view of the impermeability boundary condition \eqref{E4}, the total entropy
\[
t \mapsto \intO{ S(t, \cdot) }
\]
is a non-decreasing function of time. Unfortunately, the recent results based on the method of convex integration show that the Euler
system, even if appended by the entropy inequality \eqref{E8a}, remains ill posed at least in the multidimensional case $d=2,3$,
see \cite{ChFe2023}, \cite{FeKlKrMa}, Klingenberg et al. \cite{KlKrMaMa}.

\subsection{Dafermos' admissibility criterion}

In his seminal work \cite{Dafer}, Dafermos  proposed a refined admissibility criterion based on the order relation $\prec_{\rm D}$
between two (weak) solutions of the Euler system $(\vr_i, \vm_i, S_i)$, $i=1,2$:
\begin{align}
	(\vr_1, \vm_1, S_1) &\prec_{\rm D}
	(\vr_2, \vm_2, S_2)\br \ &\Leftrightarrow_{\rm{def}} \br
	\mbox{there exists}\ \tau \geq 0 \ \mbox{such that}\ (\vr_1, \vm_1, S_1)(t, \cdot) &=
	(\vr_2, \vm_2, S_2)(t, \cdot) \ \mbox{for all}\
	t \leq \tau, \br
	\frac{\D^+}{\dt} \intO{ S_2 (\tau, \cdot) } &>
	\frac{\D^+}{\dt}  \intO{ S_1 (\tau, \cdot) }.
	\label{Df1}
\end{align}
Here
\[
\frac{\D^+}{\dt} F(t) = \limsup_{\delta \to 0+} \frac{ F(t + \delta) - F(t) }{\delta}
\]
is the right-derivative. In other words, the solution $(\vr_2, \vm_2, S_2)$ produces (locally) more entropy than
$(\vr_1, \vm_1, S_1)$.

We say that a solution $(\tvr, \tvm, \widetilde{S})$ of the Euler system is \emph{maximal dissipative} (or Dafermos admissible) if it is maximal with respect to the order relation
$\prec_{\rm D}$. Maximal should be understood in the sense there is no other solution $(\vr, \vm, {S})$ such that
\[
(\tvr, \tvm, \widetilde{S}) \prec_{\rm D} (\vr, \vm, {S}),
\]
which coincides with the original definition proposed in \cite{Dafer}.

\subsection{Dafermos criterion in the class of measure-valued solutions}

The concept of \emph{measure-valued} solution has been proposed by DiPerna \cite{DiP2}
and later developed in a series of papers by DiPerna and Majda \cite{DiPMaj88}, \cite{DiPMaj87}, \cite{DiPMaj87a} to capture possible oscillations
in families of \emph{consistent approximations} of the Euler system. Examples of consistent approximations are the vanishing viscosity limit or
limits of a suitable numerical scheme, see the \cite[Chapter 5]{FeLMMiSh}. Solutions are represented by families of parametrized (Young) measures yielding the probability the approximate
sequence occupies a specific part of the phase space.

A proper measure-valued formulation of the (full compressible) Euler system \eqref{E1}--\eqref{E3} taking into account both oscillations and concentrations
in approximate sequences was introduced in \cite{BreFei17}, see Section \ref{d} below. Dafermos' criterion can be easily reformulated in terms
of the expected values of the Young measure. Specifically, we set
\begin{equation} \label{ev} 
	\vr(t,x) = \big< \mathcal{V}_{t,x}, \tvr \big>, \ \vm(t,x) = \big< \mathcal{V}_{t,x}, \tvm \big>,\ S(t,x) = \Big< \mathcal{V}_{t,x}, \widetilde{S} \Big>,
\end{equation}
where $\{ \mathcal{V}_{t,x} \}$ is the Young measure associated to the measure-valued solution, and retain the original
Dafermos formulation \eqref{Df1} in the same form. This approach is in line with the general idea that any measure-valued solution should be identified with a set of observables -- expected values of physically relevant quantities, while the specific form of the Young measure is irrelevant, see \cite{FeLMScSh}.

Our first result, stated rigorously in Theorem \ref{dT1} below, asserts that
if a measure-valued solution is maximal dissipative, then the associated Young measure reduces to the Dirac mass. In particular, any
consistent approximation converges strongly (in the $L^p$-sense), and its limit is a weak solution of the Euler system. The second result
is the extension of the former one to the narrower class of computable solutions generated by a family of suitable consistent approximations.
In particular, we show in Theorem \ref{cadT} that any limit of oscillatory consistent approximations cannot be maximal dissipative in the
class of computable solutions. In Section~\ref{NUM} we document these results by a series of numerical experiments.

\subsection{Measure-valued solutions with small energy defect}

Solutions of the Euler system \eqref{E1}--\eqref{E3}, with the impermeability boundary condition \eqref{E4}
and the initial condition
\[
\vr(0, \cdot) = \vr_0,\ \vm(0, \cdot) = \vm_0, \ S(0, \cdot) = S_0,
\]
conserve the total energy:
\[
\intO{ E(\vr, \vm, S)(t, \cdot)} = \intO{ E(\vr_0, \vm_0, S_0) }\ \mbox{for any}\ t > 0.
\]
This property is preserved even in the class of weak solutions.
The measure-valued solutions, however, experience a non-negative energy defect
\[
D_{\rm E}(t) = \intO{ E(\vr_0, \vm_0, S_0) } - \intO{ E(\vr, \vm, S)(t, \cdot)} \geq 0,
\]
see Section \ref{cp}. As we shall see below, a measure valued solution coincides with a weak solution if and only if the energy defect vanishes.
Intuitively, it is desirable to consider measure-valued solutions with small energy defect. As shown in Theorem \ref{Tdef}, for any $\delta > 0$, there exists
a measure-valued solution with the energy defect smaller than $\delta$.

\subsection{Energy preserving measure-valued solutions}

Our choice of state variables specified in \eqref{ev} is related to the system
of field equations available for the measure-valued solutions constructed
in \cite{BreFei17}. In the context of the complete Euler system \eqref{E1}--\eqref{E3}, however,
a ``natural'' choice are the conservative variables
\begin{equation} \label{ev1}
	\vr(t,x) =\big< \mathcal{V}_{t,x}, \tvr \big>, \ \vm(t,x) = \big< \mathcal{V}_{t,x}, \tvm \big>,\ E(t,x) = \Big< \mathcal{V}_{t,x}, \widetilde{E} \Big>, 
\end{equation}
where $E$ is the total energy of the system. Note, however, there are no sufficient {\it a priori} bounds on the convective terms in \eqref{E3} that would
justify the Young measure formulation of energy equation \eqref{E3}.

In order to show robustness of our results, we follow the strategy of
Fjordholm, Mishra, and Tadmor \cite{FjMiTa1} based on the
(hypothetical) existence of measure-valued solutions generated by uniformly bounded sequences of approximate solutions. This gives rise
to the truly measure-valued formulation of the Euler system based on a single Young measure sitting on the conservative variables
$(\tvr, \tvm, \tvE)$. In particular, the principle of energy conservation pertinent to the Euler system is automatically satisfied.
We call this class of solutions energy preserving measure-valued solutions.

We show that our main result stated originally in Theorem \ref{dT1} remains
valid in the class of energy preserving measure-valued solutions. Specifically, if an energy preserving measure-valued solution complies with Dafermos' maximal entropy admissibility principle, then it is a weak solution of the Euler system, see Theorem \ref{TB1}.

The paper is organized as follows. In Section \ref{d}, we recall the concept of dissipative measure-valued solution for the Euler system introduced
in \cite{BreFei17}. Section \ref{M} contains the main result on regularity of maximal dissipative solutions. Section \ref{ca} is devoted to consistent approximations of the Euler system. The proofs of the main results concerning admissibility are completed in Section \ref{vd}. The existence of
solutions with small energy defect is discussed in Section \ref{def}.
The extension of the results to the class of energy preserving measure-valued solutions is presented in Section \ref{B}.
Section~\ref{NUM} illustrates theoretical results. We demonstrate that if a solution exhibits oscillations or concentrations then standard numerical methods  are not compatible with the Dafermos maximal entropy production rate criterion. We conclude the paper by formulating a hypothesis on the existence of a maximal computable measure-valued solution and its approximation by numerical methods.

\section{Dissipative measure-valued (DMV) solutions}
\label{d}

For the sake of simplicity,
we consider a polytropic EOS in the form
\begin{equation} \label{r1a}
	p = (\gamma - 1) \vr e ,\ \gamma > 1.
\end{equation}
Moreover, without loss of generality, we introduce the \emph{absolute temperature} $\vt$ proportional to $e$,
\begin{equation} \label{r2a}
	e = c_v \vt, \ c_v = \frac{1}{\gamma -1}.
\end{equation}
This yields
\begin{equation}\label{r3a}
	p(\vr, S) = (\gamma - 1) \vr e (\vr, S) = 
	\begin{cases}
		\vr^\gamma \exp \Big( \frac{S}{c_v \vr} \Big) \ &\mbox{if} \ \vr > 0, \\
		0 \ &\mbox{if}\ \vr = 0,\ S \leq 0, \\
		\infty \ &\mbox{if}\ \vr = 0,\ S > 0.
	\end{cases}
\end{equation}
In particular,

\textcolor{black}{\begin{equation} \label{r1}
		\frac{\partial (\vr e(\vr, S))}{\partial S} = \vt > 0,
\end{equation}}
meaning $\vr e(\vr, S)$ is a strictly increasing function of $S$.
Introducing the kinetic energy
\begin{equation} \label{RR1}
	\frac{1}{2} \frac{|\vm|^2}{\vr} =
	\begin{cases}
		\frac{1}{2} \frac{|\vm |^2}{\vr} \ &\mbox{if}\ \vr > 0,\\
		0 \ &\mbox{if}\ \vr = 0, \vm = 0, \\
		\infty \ &\mbox{otherwise}
	\end{cases}
\end{equation}
we can check that the total energy
\[
E(\vr, S) = \frac{1}{2} \frac{|\vm|^2}{\vr} + \vr e(\vr, S)
\]
is a convex l.s.c. function of the conservative-entropy variables $(\vm, \vr, S) \in R^{d+2}$, strictly convex on its domain,
in accordance with hypothesis of thermodynamic stability, cf. \cite[Chapter 2, Section 2.2.4]{FeLMMiSh}.
As a matter of fact, all results claimed in the present paper apply to a much larger class of EOS as long as Gibbs' relation and hypothesis of thermodynamic stability remain valid.

\subsection{DMV solutions}

A dissipative measure-valued solution consists of a parametrized (Young) measure
\begin{equation} \label{d1}
	\mathcal{V} = \mathcal{V}_{t,x}: (t,x) \in (0,T) \times \Omega \to \mathfrak{P} (R^{d+2}),\
	\mathcal{V} \in L^\infty_{\rm weak-(*)}((0,T) \times \Omega; \mathfrak{P} (R^{d+2})),
\end{equation}
where $\mathfrak{P}$ denotes the set of Borel probability measures defined
on the space of ``dummy'' variables
\[
R^{d+2} = \Big\{ (\tvr, \tvm, \widetilde{S}) \mid  (\tvr, \tvm,  \widetilde{S}) \in R^{d+2} \Big\},
\]
and a concentration measure
\begin{equation} \label{d1a}
	\mathfrak{C} \in L^\infty_{\rm weak-(*)}(0,T; \mathcal{M}^+ (\Ov{\Omega}; R^{d \times d}_{\rm sym})),
\end{equation}
where
the symbol $\mathcal{M}^+ (\Ov{\Omega}; R^{d \times d}_{\rm sym}))$ denotes the set of all matrix-valued positively semi-definite finite Borel measures on
the compact set $\Ov{\Omega}$.

\begin{Definition}[{\bf DMV solution}] \label{Dd2}
	
	We shall say that a parametrized measure $\{ \mathcal{V}_{t,x} \}$ and
	a concentration measure $\mathfrak{C}$ represent \emph{dissipative measure-valued (DMV) solution} of problem \eqref{E1}--\eqref{E3}, \eqref{E4}, with the initial data $(\vr_0, \vm_0, S_0)$ in $(0,T) \times \Omega$ if the following holds:
	\begin{itemize}		
		\item
		The equation of continuity
		\begin{equation} \label{d2}
			\int_0^T \intO{ \Big[ \big< \mathcal{V}_{t,x}; \tvr \big> \partial_t \varphi(t,x) + \big< \mathcal{V}_{t,x}; \widetilde{\vm} \big> \cdot \Grad \varphi(t,x) \Big]} \dt = - \intO{ \vr_0(x) \varphi(0, x) } 	
		\end{equation}
		holds for any $\varphi \in C^1([0,T) \times \Ov{\Omega})$.
		\item
		The momentum equation
		\begin{align}
			\int_0^T &\intO{ \Bigg[ \big< \mathcal{V}_{t,x}; \widetilde{\vm} \big> \cdot \partial_t \bfphi(t,x) + \bigg< \mathcal{V}_{t,x}; \mathds{1}_{\tvr > 0} \bigg(
				\frac{ \tvm \otimes \tvm}{\tvr} + p(\tvr, \widetilde{S}) \mathbb{I} \bigg)  \bigg>  \Grad \bfphi(t,x) 	 \Bigg] } \dt \br
			&= - \int_0^T \int_{\Ov{\Omega}} \Grad \bfphi(t,x) : \D \mathfrak{C} (t,x)
			\label{d3}	- \intO{ \vm_0(x) \cdot \bfphi(0, x) }
		\end{align}
		holds for any $\bfphi \in C^1([0,T) \times \Ov{\Omega}; R^d)$, $\bfphi \cdot \bm{n}|_{\partial \Omega} = 0$.
		\item
		The entropy inequality
		\begin{align}
			\int_0^T &\intO{ \bigg[ \Big< \mathcal{V}_{t,x}; \widetilde{S} \Big> \partial_t \varphi(t,x) + \bigg< \mathcal{V}_{t,x}; \mathds{1}_{\tvr > 0} \widetilde{S} \frac{\tvm}{\tvr} \bigg> \cdot \Grad \varphi(t,x) \bigg] } \dt
			\br &\leq - \intO{ S_0(x) \varphi (0, x) }
			\label{d4}		
		\end{align}
		holds for any $\varphi \in C^1([0,T) \times \Ov{\Omega})$, $\varphi \geq 0$.
		\item
		The energy compatibility condition
		\begin{align}
			\intO{ E(\vr_0, \vm_0, S_0) }
			&\geq \intO{ \Big< \mathcal{V}_{t,x}; E(\tvr, \tvm, \widetilde{S})\Big>}	+   r(d, \gamma)\int_{\Ov{\Omega}} \ \D {\rm trace}[ \mathfrak{C}](t,x) ,\br
			r(d, \gamma) &= \min \Bigg\{ \frac{1}{2}; \frac{d \gamma}{\gamma - 1} \Bigg\}
			\label{d5}
		\end{align}	
		holds for a.a. $t \in (0,T)$.
		
	\end{itemize}
	
\end{Definition}

The concept of DMV solution was introduced in \cite{BreFei17} and later elaborated in the monograph \cite[Chapter 5]{FeLMMiSh}. Kr\" oner and Zajaczkowski \cite{KrZa}
introduced a concept of measure-valued solution to the Euler system postulating entropy \emph{equation} rather than inequality \eqref{d4}. Unfortunately,
the physically admissible shock waves weak solutions do not fit in this class.

\subsection{Basic properties of DMV solutions}

We start with global existence of DMV solutions proved in \cite[Proposition 3.8]{BreFeiHof19C}.

\begin{Proposition}[{\bf Global existence}] \label{Pd1}
	Let $\Omega \subset R^d$, $d=2,3$ be a bounded domain with smooth boundary. Suppose the thermodynamic functions $p$, $e$, and $s$ satisfy hypotheses \eqref{r1a}--\eqref{r3a}. Let the initial data belong to the class
	\begin{align}
		(\vr_0, \vm_0, S_0) \in L^1(\Omega; R^{d+2}),\
		\intO{ E(\vr_0, \vm_0, S_0) } < \infty, \ S_0 \geq \underline{s} \vr_0 	\
		\mbox{a.a. in}\ \Omega,
		\label{dclass}
	\end{align}
	where $\underline{s} \in R$ is a given constant.	
	
	Then the Euler system \eqref{E1}--\eqref{E3}, \eqref{E4} admits a DMV solution
	in $(0,T) \times \Omega$ specified in Definition \ref{Dd2}. In addition,
	the solution satisfies
	\begin{equation}\label{dd1}
		\mathcal{V}_{t,x} \Big\{ \tvr \geq 0,\ \widetilde{S} \geq \underline{s} \tvr \Big\} = 1 \ \mbox{for a.a.}\ (t,x) \in (0,T) \times \Omega.
	\end{equation}	
\end{Proposition}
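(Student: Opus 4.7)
The plan is to obtain the DMV solution as a measure--valued limit of a suitable family of approximations of the Euler system, exploiting the natural \emph{a priori} bounds coming from the total energy and entropy. A convenient approximating system is the compressible Navier--Stokes--Fourier system with artificial viscosity, heat conductivity, and pressure regularization (so that global weak solutions exist and have enough regularity to manipulate the entropy), indexed by a small parameter $\ep > 0$. This construction, together with an entropy minimum principle for the approximations giving $S_\ep \geq \underline{s} \vr_\ep$ pointwise, is the approach taken in \cite{BreFeiHof19C}.

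First, I would establish the uniform bounds. Testing the approximate continuity and momentum equations and using the total energy balance produces
\[
\operatorname{ess\,sup}_{t \in (0,T)} \intO{ E(\vr_\ep, \vm_\ep, S_\ep)(t,\cdot) } \leq \intO{ E(\vr_0, \vm_0, S_0) },
\]
which, via the thermodynamic stability hypothesis and \eqref{r3a}, yields uniform bounds on $\vr_\ep$ in $L^\infty(0,T;L^\gamma)$, on $\vm_\ep$ in $L^\infty(0,T;L^{2\gamma/(\gamma+1)})$, and on $S_\ep$ in a suitable $L^\infty(0,T;L^p)$ space through the entropy--energy coercivity. The entropy lower bound $S_\ep \geq \underline{s}\vr_\ep$ follows by a standard maximum principle argument applied to the renormalized entropy equation for the approximation, using $S_0 \geq \underline{s}\vr_0$.

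Next, I would apply the fundamental theorem on Young measures to extract a subsequence such that $(\vr_\ep, \vm_\ep, S_\ep)$ generates a parametrized probability measure $\{\mathcal{V}_{t,x}\}$ on $R^{d+2}$, concentrated on $\{\tvr \geq 0,\ \widetilde{S} \geq \underline{s}\tvr\}$ by the pointwise constraint. The delicate point is the passage to the limit in the nonlinear flux $\vm_\ep \otimes \vm_\ep / \vr_\ep + p(\vr_\ep, S_\ep)\mathbb{I}$, which is only uniformly bounded in $L^\infty(0,T;L^1(\Omega))$ and may concentrate on a set of measure zero. I would capture this by writing the weak-$(*)$ limit of this flux as
\[
\YM{\mathds{1}_{\tvr>0}\bigl( \tvm \otimes \tvm/\tvr + p(\tvr,\widetilde{S})\mathbb{I}\bigr)} + \mathfrak{C},
\]
where the positive semidefinite matrix-valued concentration measure $\mathfrak{C}$ is obtained from the DiPerna--Majda-type decomposition; positive semidefiniteness follows because the integrand is itself positive semidefinite. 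Passage to the limit in the linear continuity equation and in the entropy inequality (which uses only convexity/linearity in the variables on which the Young measure acts, together with lower semicontinuity for the inequality \eqref{d4}) is routine. The energy compatibility \eqref{d5} is obtained from the total energy identity by taking a weak-$(*)$ limit: the defect between the initial energy and the weakly attained energy at time $t$ must dominate the concentration part of the flux, and the explicit constant $r(d,\gamma) = \min\{1/2,\, d\gamma/(\gamma-1)\}$ is produced by comparing the trace of the kinetic concentration with that of the pressure concentration, which is the sharpest constant compatible with the inequality $\frac{1}{2}|\vm|^2/\vr + \frac{d\gamma}{\gamma-1}p \geq E$ on the admissible domain.

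The main obstacle, in my view, is twofold: first, constructing the approximating sequence in such a way that the entropy minimum principle $S_\ep \geq \underline{s}\vr_\ep$ genuinely propagates to the vacuum region without being destroyed by the artificial heat flux and pressure regularization; this requires a careful choice of the regularization compatible with Gibbs' relation \eqref{E5}. Second, identifying the sharp constant $r(d,\gamma)$ in \eqref{d5} requires a delicate analysis of the concentration measure $\mathfrak{C}$: one must show that its kinetic and internal-energy components can be estimated against $\operatorname{trace}[\mathfrak{C}]$ with the stated factor, which reduces to an algebraic optimization over positive semidefinite matrices and scalars subject to the pressure--kinetic energy balance. Once these two issues are handled, the verification of Definition \ref{Dd2} and property \eqref{dd1} follows by standard Young measure arguments.
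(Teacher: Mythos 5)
You have reconstructed the right overall strategy, but note first that the paper contains no proof of this proposition: it is quoted verbatim from \cite[Proposition 3.8]{BreFeiHof19C}, and the only information the paper gives about that proof is the description in Section 6, where the solution is built as a limit of consistent approximations solving $\partial_t \vre + \Div(\vre\vue) = 0$, $\partial_t(\vre\vue) + \Div(\vre\vue\otimes\vue) + \Grad p(\vre,s_\ep) = \ep\mathcal{L}[\vue]$ with $\mathcal{L}$ a third--order elliptic operator, coupled with the \emph{pure transport} equation $\partial_t s_\ep + \vue\cdot\Grad s_\ep = 0$ for the specific entropy. Your skeleton --- uniform bounds from the energy, Young measure generation, a positive semidefinite concentration measure absorbing the $L^1$ flux, lower semicontinuity for the entropy inequality, and the energy compatibility condition controlling $\operatorname{trace}[\mathfrak{C}]$ --- matches that construction. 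The genuine difference is your choice of approximating system: a Navier--Stokes--Fourier regularization with heat conductivity and pressure regularization. You correctly identify that propagating the minimum principle $S_\ep \geq \underline{s}\vre$ through an artificial heat flux is then the main difficulty; but this difficulty is one the reference deliberately avoids, since with $s_\ep$ exactly transported along particle paths the bound $s_\ep \geq \underline{s}$ is immediate from the initial data, and the entropy inequality in the limit follows from the renormalized transport structure together with convexity. Your route is viable (entropy minimum principles for NSF-type systems are known) but buys nothing here and costs the extra argument you yourself flag as the main obstacle. A smaller caveat: your heuristic for $r(d,\gamma)$ is not quite the computation that produces it --- the relevant comparison is between $E = \tfrac12|\vm|^2/\vr + \vr e$ and $\operatorname{trace}\bigl[\vm\otimes\vm/\vr + p\mathbb{I}\bigr] = |\vm|^2/\vr + d(\gamma-1)\vr e$ --- but this is bookkeeping rather than a gap.
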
	

\begin{Remark} \label{Rr1}
	
	Hypothesis $S_0 \geq \underline{s} \vr_0$ and its counterpart in \eqref{dd1} correspond to the entropy minimum principle valid in the
	class of DMV solutions, see e.g. \cite{BreFeiHof19C} or \cite[Chapter 4, Section 4.1.5]{FeLMMiSh}. The lower bound on the entropy yields uniform {\it a priori bounds}
	\[
	\vr, S \in L^\infty(0,T; L^\gamma (\Omega)),\ \vm \in L^\infty(0,T; L^{\frac{2 \gamma}{\gamma + 1}}(\Omega; R^d)),
	\]
	necessary for the existence theory.
	
\end{Remark}

\begin{Remark} \label{Rr2}
	
	Smoothness of the boundary $\partial \Omega$ is necessary for the artificial ``viscosity'' construction used in the existence proof in
	\cite[Proposition 3.8]{BreFeiHof19C}. It can be possibly relaxed by means of a suitable approximation of $\Omega$ by a family of smooth invading
	domains.
\end{Remark}

In addition, the DMV solutions enjoy other interesting properties, some of them listed below:

\begin{itemize}
	\item {\bf Convexity, compactness.} The set $\mathcal{U}[\vr_0, \vm_0, S_0]$ of all DMV solutions in $(0,T) \times \Omega$ emanating from given initial data $(\vr_0, \vm_0, S_0)$ is
	convex and compact in suitable (weak) topologies, see \cite[Chapter 5, Theorem 5.2]{FeLMMiSh} for details. 	
	\item {\bf Compatibility.}
	If $(\vr, \vm, S)$ are continuously differentiable and $\vr$ bounded below away from zero, then
	the DMV solution is a classical solution, specifically,
	\begin{equation} \label{sm}
		\mathcal{V}_{t,x} = \delta_{[\vr(t,x), \vm(t,x), S(t,x)]} ,\ \mathfrak{C} = 0,
	\end{equation}
	and $(\vr, \vm, S)$ satisfy \eqref{E1}--\eqref{E3}, see \cite[Chapter 5, Theorem 5.7]{FeLMMiSh}.
	
	\item{\bf Weak--strong uniqueness.}
	A DMV solution coincides with the strong solution (in the sense specified in \eqref{sm}) emanating from the same initial data as long as
	the latter exists, see \cite[Chapter 6, Theorem 6.2]{FeLMMiSh}. Note that \emph{local} existence of strong solutions for smooth initial data was proved by
	Schochet \cite{SCHO1}.
	
\end{itemize}		

\section{Main result on regularity of maximal DMV solutions}
\label{M}

Our main result concerning regularity of DMV solutions will be formulated in terms of the expected values introduced in \eqref{ev}.
Note that the triple $(\vr, \vm, S)$ represents the so-called
dissipative solution introduced in \cite[Chapter 5]{FeLMMiSh}. The same concept
has also been used in \cite{BreFeiHof19C} to identify a semigroup selection for the Euler system.

Under the hypotheses of Proposition \ref{Pd1}, the expected values can be uniquely extended as functions defined for any time $t \in [0,T]$. More precisely, we set
\begin{align*}
	& \int_\Omega \vr(t, x) \varphi(x) \mbox{d} x  =
	\lim_{\delta \to 0+} \frac{1}{\delta} \int_{t-\delta}^t \int_\Omega \big< \mathcal{V}_{t,x};\tvr \big> \varphi(x) \mbox{d} x \mbox{d} s, \\
	& \int_\Omega \vm(t, x) \bfphi(x) \mbox{d} x  =
	\lim_{\delta \to 0+} \frac{1}{\delta} \int_{t-\delta}^t \int_\Omega \big< \mathcal{V}_{t,x};\tvm \big> \bfphi(x) \mbox{d} x \mbox{d} s, \\
	& \int_\Omega S(t, x) \varphi(x) \mbox{d} x  =
	\lim_{\delta \to 0+} \frac{1}{\delta} \int_{t-\delta}^t \int_\Omega \Big< \mathcal{V}_{t,x}; \widetilde S\Big> \varphi(x) \mbox{d} x \mbox{d} s.
\end{align*}
Specifically,
\begin{align}
	\vr &\in C_{\rm weak}([0,T]; L^\gamma (\Omega)), \ \vr(0, \cdot) = \vr_0, \br
	\vm &\in C_{\rm weak}([0,T]; L^{\frac{2 \gamma}{\gamma + 1}}(\Omega; R^d)),\
	\vm(0, \cdot) = \vm_0, \br
	S &\in BV_{\rm weak}(0,T; L^\gamma (\Omega)),\
	S_0 \leq S(0+, \cdot),\ S(t-, \cdot) \leq S(t+, \cdot) \ \mbox{for any}\ t \in (0,T).
	\label{dd2}
\end{align}	
see \cite{BreFeiHof19C} for details.
Consequently, setting $S(0, \cdot) = S_0$, the expected value of the entropy can be uniquely defined as a (weakly) c\` agl\` ad function of $t \in [0,T]$ with values in $L^\gamma(\Omega)$, (weakly) continuous at $t = 0$. Finally, it follows from the entropy inequality \eqref{d4} that the total entropy
\[
t \in [0,T] \to \intO{ S(t, \cdot) }
\]
is a non-decreasing (c\` agl\` ad) function. In particular,
\begin{align}
	0 \leq \frac{\D^+}{\dt} \intO{ S(\tau, \cdot) } &\leq \infty
	\ \mbox{for any}\ \tau \in [0,T),\br
	\frac{\D^+}{\dt} \intO{ S(t, \cdot) } &= \frac{\D }{\dt} \intO{S(t, \cdot)} < \infty \ \mbox{for a.a.}\ t \in (0,T).
	\label{dd3}
\end{align}	

We conclude that the order relation $\prec_{\rm D}$ introduced in \eqref{Df1}
can be extended to DMV solutions in terms of the expected values of the
phase variables $(\vr, \vm, S)$ introduced above. This observation justifies the following definition.

\begin{Definition}[{\bf Maximal DMV solution}] \label{Dd1}	
	A DMV solution $\{ \mathcal{V}_{t,x} \}, \mathfrak{C}$ is called \emph{maximal} if the associated trio of
	expected values $(\vr, \vm, S)$ is maximal with respect to the order relation
	$\prec_{\rm D}$ specified in \eqref{Df1}.
\end{Definition}

\textcolor{black}{While the global existence of  DMV solutions is known, cf.~Proposition~\ref{Pd1}, the existence of \emph{maximal DMV} is an open problem.
	As a matter of fact any maximal DMV solution is necessarily a weak solution as stated in our main result below.}

\begin{mdframed}[style=MyFrame]
	\begin{Theorem}[{\bf Regularity of maximal DMV solutions}] \label{dT1}
		Under the hypotheses of Proposition \ref{Pd1}, let $\{ \mathcal{V}_{t,x} \}$,
		$\mathfrak{C}$ be a maximal DMV solution of the Euler system in $(0,T) \times \Omega$.
		
		Then
		\[
		\mathcal{V}_{t,x} = \delta_{[\vr(t,x), \vm(t,x), S(t,x)]},\ \mathfrak{C} = 0,
		\] 	
		and $(\vr, \vm, S)$ satisfy the following (generalized) Euler system:
		\begin{align}
			\partial_t \vr + \Div \vm &= 0,\ \vr(0, \cdot) = \vr_0, \label{E6}\\
			\partial_t \vm + \Div \bigg( \frac{\vm \otimes \vm}{\vr} \bigg) +
			\Grad p(\vr, S) &= 0,\ \vm(0, \cdot) = \vm_0, \label{E7} \\
			\partial_t S + \Div \bigg( S \frac{\vm}{\vr} \bigg) & \geq 0,\ S(0, \cdot) = S_0,
			\label{E8}	\\
			\frac{\D }{\dt} \intO{ E(\vr, \vm, S) } &=0,\br \intO{E(\vr, \vm, S)(0+, \cdot)} &= \intO{ E(\vr_0, \vm_0, S_0)}, \label{E9}
		\end{align}
		in the sense of distributions.
		
	\end{Theorem}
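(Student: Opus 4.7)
We argue by contradiction: if the maximal DMV solution $(\mathcal{V},\mathfrak{C})$ does not already collapse to a Dirac measure with $\mathfrak{C}=0$, we construct a DMV solution from the same initial data that is strictly greater in the Dafermos order, violating maximality. The whole argument is driven by the scalar energy defect
\[
D(\tau) := \intO{E(\vr_0,\vm_0,S_0)} - \intO{E(\vr(\tau),\vm(\tau),S(\tau))} \geq 0.
\]
Jensen's inequality applied to the strictly convex energy $E$, combined with the energy compatibility \eqref{d5}, gives
\[
\intO{E(\vr(\tau),\vm(\tau),S(\tau))}\leq\intO{\YM{E(\tvr,\tvm,\widetilde{S})}(\tau)}\leq\intO{E(\vr_0,\vm_0,S_0)} - r(d,\gamma)\int_{\Ov{\Omega}}\dd\,{\rm tr}[\mathfrak{C}(\tau)].
\]
Hence if $D(\tau)=0$ for almost every $\tau$, strict convexity of $E$ forces $\mathcal{V}_{\tau,x}=\delta_{[\vr,\vm,S](\tau,x)}$ a.e.\ and $\mathfrak{C}(\tau)=0$; the DMV formulation \eqref{d2}--\eqref{d5} then reduces precisely to \eqref{E6}--\eqref{E9}, and the energy conservation in \eqref{E9} is encoded by $D\equiv 0$. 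It therefore suffices to rule out that $\{D>0\}$ has positive Lebesgue measure.

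\textbf{Restart construction.} Assume for contradiction that $D>0$ on a set of positive measure, and pick $\tau_*\in(0,T)$ that is simultaneously a right Lebesgue point of $t\mapsto \intO{E(\vr,\vm,S)(t)}$, a continuity point of the non-decreasing map $t\mapsto \intO{S(t)}$ at which $\frac{\D^+}{\dt}\intO{S(t)}$ is finite, and a point where $D(\tau_*)>0$. Such $\tau_*$ exist by \eqref{dd2}--\eqref{dd3}, since a monotone function has at most countably many jumps and is a.e.\ differentiable. Since $\partial_S E = \vartheta > 0$ by \eqref{r1}, we select $\chi\in L^\gamma(\Omega)$, $\chi\geq 0$, $\chi\not\equiv 0$, such that
\[
\intO{E(\vr(\tau_*),\vm(\tau_*),S(\tau_*)+\chi)} = \intO{E(\vr_0,\vm_0,S_0)}.
\]
The modified triple $(\vr(\tau_*),\vm(\tau_*),S(\tau_*)+\chi)$ still complies with the hypotheses of Proposition~\ref{Pd1} (the minimum entropy principle \eqref{dd1} is preserved because $\chi\geq 0$), so Proposition~\ref{Pd1} supplies a DMV solution $(\hat{\mathcal{V}},\hat{\mathfrak{C}})$ on $(\tau_*,T)$. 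We concatenate it with the original DMV on $[0,\tau_*]$.

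\textbf{Verification and contradiction.} The concatenated object is itself a DMV solution of the original problem on $(0,T)$: the continuity and momentum equations \eqref{d2}--\eqref{d3} follow from the weak time-continuity of $\vr,\vm$ in \eqref{dd2} and the matching of traces at $\tau_*$; the energy compatibility \eqref{d5} holds because the restart starts from the full initial energy by the choice of $\chi$; the entropy inequality \eqref{d4} holds because splitting the test function at $\tau_*$ via a time cut-off contributes a jump term $-\intO{\chi\,\varphi(\tau_*)}\leq 0$ for $\varphi\geq 0$, which preserves the direction of the inequality---in short, the entropy jumps only \emph{upward}, consistently with $\partial_t S + \divergence(S\vm/\vr)\geq 0$. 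The expected-value trios of the two DMV solutions agree on $[0,\tau_*]$ in the càglàd sense, whereas the total entropy of the new one jumps by $\intO{\chi}>0$ at $\tau_*$; the right derivative at $\tau_*$ of its total entropy is therefore $+\infty$, strictly exceeding the finite right derivative of the original. This is the desired violation of Dafermos' maximality.

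\textbf{Main obstacle.} The most delicate step is the rigorous verification of the entropy inequality \eqref{d4} for the concatenated object across the jump, coupled with the joint selection of $\tau_*$ meeting all three requirements---$D(\tau_*)>0$, continuity and finite right derivative of $\intO{S}$ at $\tau_*$ (so that ``$+\infty$ vs.\ finite'' is genuinely the strict inequality demanded by \eqref{Df1}), and applicability of Proposition~\ref{Pd1} to the restart data. All three are secured by \eqref{dd1}--\eqref{dd3} and the strict monotonicity of $S\mapsto E(\vr,\vm,S)$ coming from \eqref{r1}.
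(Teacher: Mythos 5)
Your proposal is correct and follows essentially the same route as the paper: identify the energy defect, observe that a positive defect at a point where the total entropy has a finite right derivative permits a concatenation (restart) with entropy raised so as to restore the initial energy, which produces an infinite right derivative of the total entropy and contradicts Dafermos maximality; vanishing defect plus Jensen's inequality and strict convexity of $E$ then force the Dirac structure, $\mathfrak{C}=0$, and the weak formulation \eqref{E6}--\eqref{E9}. The only cosmetic difference is that you normalize the restart entropy by an additive $\chi\geq 0$ achieving exact energy equality, whereas the paper only needs the inequality \eqref{cp3} and a pointwise strict increase of $S$.
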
		
	
\end{mdframed}

The system \eqref{E6}--\eqref{E9} can be viewed as a weak formulation of the Euler system \eqref{E1}--\eqref{E3}
in the total absence of {\it a priori} bounds to control the convective terms in the energy equation \eqref{E3}.
A similar approach has already been used in the context of the compressible Navier--Stokes--Fourier system in \cite{FeiNovOpen}.
The proof of Theorem \ref{dT1} will be performed in Section \ref{vd} below. The following corollary will follow immediately
from the proof.

\begin{Corollary} \label{dC1}
	
	Suppose that $\Omega \subset R^d$ as well as the initial data $(\vr_0, \vm_0, S_0)$ satisfy the hypotheses of Proposition \ref{Pd1}.
	
	Then either the (generalized) Euler system \eqref{E6}--\eqref{E9} admits a unique (weak) solution in $(0,T) \times \Omega$, or there
	is an infinite family $\{ \mathcal{V}^n_{t,x} \}$, $\mathfrak{C}^n$ of DMV solutions such that the associated expected values
	$(\vr^n, \vm^n, S^n)$ satisfy
	\[
	(\vr^n, \vm^n, S^n) \prec_{\rm D} (\vr^{n+1}, \vm^{n+1}, S^{n+1}) ,\ n = 1,2,\dots
	\]	
	
\end{Corollary}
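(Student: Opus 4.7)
The plan is to derive the corollary from Theorem \ref{dT1} by contrapositive iteration, using the existence guaranteed by Proposition \ref{Pd1}. I would assume that \eqref{E6}--\eqref{E9} does not admit a unique weak solution and then construct an infinite $\prec_D$-ascending chain of DMV solutions, splitting the construction into two cases according to whether the uniqueness failure comes from nonexistence or multiplicity of weak solutions.

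First, if no weak solution of \eqref{E6}--\eqref{E9} exists, then by the contrapositive of Theorem \ref{dT1} no DMV solution can be $\prec_D$-maximal, so starting from any $\mathcal{V}^1 \in \mathcal{U}[\vr_0,\vm_0,S_0]$ (nonempty by Proposition \ref{Pd1}) the successive strict extensions $\mathcal{V}^{n+1} \succ_D \mathcal{V}^n$ exist by definition of non-maximality and directly produce the chain. Second, if two distinct weak solutions $W_1 \neq W_2$ exist, I take as initial element the DMV solution with Young measure $\mathcal{V}^1_{t,x} = \tfrac{1}{2}\delta_{(\vr_1,\vm_1,S_1)(t,x)} + \tfrac{1}{2}\delta_{(\vr_2,\vm_2,S_2)(t,x)}$ and $\mathfrak{C}^1 = 0$, which belongs to $\mathcal{U}$ by convexity of the DMV class (cf.\ \cite[Chapter 5, Theorem 5.2]{FeLMMiSh}) and whose Young measure is non-Dirac wherever $W_1 \neq W_2$; Theorem \ref{dT1} then forces $\mathcal{V}^1$ to be non-maximal, so a strict $\prec_D$-successor exists and the iteration begins.

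The \textbf{main obstacle} I foresee is guaranteeing that the iteration in the second case never terminates at a weak (hence maximal) element. If an intermediate $\mathcal{V}^{n+1}$ turns out to be a weak solution, I plan to replace it by a convex combination with another weak solution (available thanks to the non-uniqueness assumption), producing a non-Dirac DMV solution that still strictly $\prec_D$-dominates $\mathcal{V}^n$; the nonlinearity of the momentum flux $\vm \otimes \vm / \vr$ and of the pressure $p(\vr,S)$ prevents such a mixture from itself being a weak solution on any nontrivial subinterval where its constituents disagree, so by Theorem \ref{dT1} the replacement remains non-maximal and the extension procedure continues. The delicate technical step is verifying that this convex-combination modification preserves $\prec_D$-dominance with respect to $\mathcal{V}^n$, in particular the compatibility of the agreement interval $[0,\tau]$ and the comparison of right entropy-production derivatives at $\tau$.
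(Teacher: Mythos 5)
Your first case (no weak solution of \eqref{E6}--\eqref{E9} exists) is sound: Theorem \ref{dT1} excludes maximal elements, Proposition \ref{Pd1} supplies a starting point, and the non-maximality iteration yields the chain. The genuine gap is in your second case, and it is exactly the obstacle you flag but do not resolve. The abstract iteration ``$\mathcal{V}^n$ non-maximal $\Rightarrow$ pick a strict $\prec_D$-successor'' can stall: Theorem \ref{dT1} gives only ``maximal $\Rightarrow$ weak'', not the converse, so if some $\mathcal{V}^{n+1}$ happens to be a weak solution you have no tool to produce $\mathcal{V}^{n+2}$. Your proposed repair --- replacing such a $\mathcal{V}^{n+1}$ by its mixture with a second weak solution $W_i$ --- fails as stated: the relation $\mathcal{V}^{n}\prec_D\mathcal{V}^{n+1}$ holds via some agreement interval $[0,\tau]$, possibly with $\tau>0$, and the expected values of the mixture, e.g.\ $\frac12(\vr^{n+1}+\vr_{W_i})$, agree with those of $\mathcal{V}^n$ on $[0,\tau]$ only if $W_i$ does --- and there is no reason it should. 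So the dominance over $\mathcal{V}^n$ is genuinely destroyed, not merely ``delicate''; the nonlinearity argument you invoke addresses why the mixture is not a weak solution, which is not the issue.

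The paper avoids any iteration. From the base solution $\mathcal{V}^1_{t,x}=\frac12\delta_{W_1(t,x)}+\frac12\delta_{W_2(t,x)}$, strict convexity of $E$ and Jensen's inequality (each $W_i$ conserving energy) give $D_E(t)>0$ on a set of times of positive Lebesgue measure. One then chooses a strictly \emph{decreasing} sequence $\tau_1>\tau_2>\cdots$ of such times at which the total entropy of the base is also differentiable, and defines $\mathcal{V}^n$ as the base concatenated at $\tau_n$ (Lemma \ref{Ldv1}) with a continuation whose initial entropy is strictly raised, exactly as in Section \ref{proof}. Then $\mathcal{V}^n$ and $\mathcal{V}^{n+1}$ both coincide with the base on $[0,\tau_{n+1}]$, $\mathcal{V}^{n+1}$ satisfies $\frac{\D^+}{\dt}\intO{S^{n+1}}=+\infty$ at $\tau_{n+1}$, while $\mathcal{V}^n$ still equals the base near $\tau_{n+1}$ and hence has a finite derivative there; thus $\mathcal{V}^n\prec_D\mathcal{V}^{n+1}$ for every $n$ by construction, and no element of the chain has to be tested for maximality. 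You should replace your second-case iteration by this explicit construction; your first case can stand, or be subsumed by the same device, since a DMV solution that is not a weak solution also has $D_E>0$ on a set of positive measure.
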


\section{Consistent approximations, main result on convergence}
\label{ca}

The concept of consistent approximation originates in numerical analysis.
Consistent approximations satisfy the Euler system in a weak sense modulo an approximation error, see \cite[Chapter 5]{FeLMMiSh}.
Some of the DMV solutions introduced above may be generated by families of consistent approximations.

\begin{Definition}[{\bf Consistent approximation}] \label{Dca1}	
	
	We say that a family $(\vre, \vme, {S}_\ep)_{\ep > 0}$ is \emph{consistent approximation} of
	the Euler system \eqref{E1}--\eqref{E3}, \eqref{E4} with the initial data $(\vr_0, \vm_0, S_0)$
	if holds:
	\begin{itemize}
		
		\item Approximate total energy inequality
		\begin{equation} \label{ca2}
			\intO{ E(\vre, \vme, S_\ep)(\tau, \cdot) } \leq \intO{ E(\vr_{0}, \vm_{0} , S_{0} ) } + e^1_\ep
		\end{equation}
		for a.a. $\tau \in (0,T)$, where $e^1_\ep \to 0$ as $\ep \to 0$.
		\item
		Approximate equation of continuity
		\begin{align}
			\bigg[ \intO{ \vre (t, \cdot)\varphi  } \bigg]_{t = \tau_1-}^{t= \tau_2-} &= \int_{\tau_1}^{\tau_2} \intO{  \vme \cdot \Grad \varphi  } \dt + \int_{[\tau_1, \tau_2)} \D e^2_{\ep}[\varphi],\br \vre(0-, \cdot) &\equiv \vr_{0} \br
			e^2_\ep[\varphi] &\in \mathcal{M}[0,T],\ \int_{[0,T]} \D |e^2_\ep [\varphi] | \to 0 \ \mbox{as}\ \ep \to 0
			\label{ca3}
		\end{align}	
		holds for any $0 \leq \tau_1 \leq \tau_2 \leq T$, and any $\varphi \in C^\infty(\Ov{\Omega})$.
		
		\item Approximate momentum balance
		\begin{align}
			\bigg[ \intO{ \vme (t, \cdot) \cdot \bfphi } \bigg]_{t = \tau_1-}^{t = \tau_2 -} &= 	
			\int_{\tau_1}^{\tau_2} \intO{ \mathds{1}_{\vre > 0} \bigg( \frac{\vme \otimes \vme }{\vre} + p(\vre, S_\ep) \mathbb{I} \bigg) :
				\Grad \bfphi  } \dt\br  &+ \int_{[\tau_1,\tau_2)} \D {e}^3_\ep [\bfphi],\br 
			\vme(0-, \cdot) &\equiv \vm_{0},
			\br
			{e}^3_\ep [\bfphi] &\in \mathcal{M}[0,T],\ \int_{[0,T]} \D |{e}^3_\ep [\bfphi] | \to 0 \ \mbox{as}\ \ep \to 0	
			\label{ca4}
		\end{align}
		holds for any $0 \leq \tau_1 \leq \tau_2 \leq T$,	$\bfphi \in C^\infty (\Ov{\Omega}; R^d)$, $\bfphi \cdot \bm{n}|_{\partial \Omega} = 0$.
		\item
		The entropy complies with the minimum principle
		\begin{equation} \label{mini}
			S_\ep \geq \underline{s} \vre \ \mbox{a.a. in}\ (0,T) \times \Omega,
		\end{equation}
		and approximate entropy inequality
		\begin{align}
			\bigg[ \intO{ S_\ep (t, \cdot) \varphi } \bigg]_{t = \tau_1-}^{t = \tau_2-} &\geq 	
			\int_{\tau_1}^{\tau_2} \intO{ \mathds{1}_{\vre > 0} S_\ep \frac{\vme}{\vre} : \Grad \varphi } \dt	
			+ \int_{[\tau_1, \tau_2)} \D e^4_\ep[\varphi],\br
			S_\ep(0-, \cdot)  &\equiv S_{0},   \br
			e^4_\ep[\varphi] &\in \mathcal{M}[0,T],\ \int_{[0,T]} \D |e^4_\ep [\varphi] | \to 0 \ \mbox{as}\ \ep \to 0
			\label{ca5}
		\end{align}
		holds for any $0 \leq \tau_1 \leq \tau_2 \leq T$, and any $\varphi \in C^\infty(\Ov{\Omega})$, $\varphi \geq 0$.
	\end{itemize}
	
\end{Definition}

Here, the \emph{consistency errors} $e^i_\ep[\varphi]$, $i = 2,3,4$ are measures on the compact interval $[0,T]$.
Accordingly, the quantities
\[
t \mapsto \intO{\vre (t, \cdot) \varphi},\ t \mapsto \intO{\vme(t, \cdot) \cdot \bfphi },\
t \mapsto \intO{ S_\ep (t, \cdot) \varphi }
\]
can be interpreted as BV functions defined in $[0,T]$.

Consistent approximations are usually represented by solutions of suitable numerical schemes, several examples can be found in the monograph
\cite[Part III]{FeLMMiSh}. Note, however, that
the definition of consistent approximation in \cite[Chapter 5]{FeLMMiSh} is slightly different, in fact more general, than Definition \ref{Dca1}.
\textcolor{black}{We work with Definition~\ref{Dca1} just to simplify the presentation, the results presented here can be proved also in the setting of \cite{FeLMMiSh}.} 
Another example is the vanishing viscosity approximation used in \cite{BreFeiHof19C} to construct DMV solutions for arbitrary initial data,
cf. Proposition \ref{Pd1}.

It is not difficult to see that any consistent approximation generates, up to a suitable subsequence, a DMV solution $\{ {\mathcal V}_{t,x} \}$, $\mathfrak{C}$ of the
Euler system, see \cite[Chapter 7]{FeLMMiSh}. Indeed it follows from the energy inequality \eqref{ca2} combined with the minimum entropy
principle \eqref{mini} that
\begin{align}
	\vre &\to \vr \ \mbox{weakly-(*) in}\ L^\infty (0,T; L^\gamma(\Omega)), \br
	\vme &\to \vm \ \mbox{weakly-(*) in}\ L^\infty (0,T; L^{\frac{2 \gamma}{\gamma + 1}}(\Omega; R^d)), \br
	S_\ep &\to S \ \mbox{weakly-(*) in}\ L^\infty (0,T; L^\gamma(\Omega))
	\label{gen1}
\end{align}
at least for a suitable subsequence.

Moreover, by the same token,
\begin{align}
	\mathds{1}_{\vre > 0} \bigg( \frac{\vme \otimes \vme }{\vre} + p(\vre, S_\ep) \mathbb{I} \bigg) \to
	\Ov{ \frac{\vm \otimes \vm }{\vr} + p(\vr, S) \mathbb{I} } \br \mbox{weakly-(*) in}\
	L^\infty (0,T; \mathcal{M}^+ (\Ov{\Omega}; R^{d \times d}_{\rm sym})).\notag
\end{align}
We set
\begin{equation} \label{gen2}
	\mathfrak{C} = \Ov{ \frac{\vm \otimes \vm }{\vr} + p(\vr, S) \mathbb{I} } - \bigg< \mathcal{V};
	\frac{\tvm \otimes \tvm }{\tvr} + p(\tvr, \widetilde{S}) \mathbb{I} \bigg>,
\end{equation}
see \cite[Chapter 7]{FeLMMiSh} for details. Now, it is a routine matter to perform the limits in \eqref{ca2}--\eqref{ca5} to conclude that
$\{ \mathcal{V}_{t,x} \}$, $\mathfrak{C}$ is a DMV solution \emph{generated} by the consistent approximation $(\vre, \vme, S_\ep)_{\ep > 0}$.

The following result asserts that the DMV solutions generated by \emph{weakly} convergent consistent approximations cannot be a weak solutions, even in the more general
sense \eqref{E6}--\eqref{E9}.

\begin{Proposition} \label{Pca1}
	Let $(\vre, \vme, S_\ep)_{\ep > 0}$ be a consistent approximation of the Euler system generating a Young measure $\mathcal{V}_{t,x}$ and a concentration
	measure $\mathfrak{C}$ -- a DMV solution of the Euler system in $(0,T) \times \Omega$. In addition, suppose that
	the expected values
	\[
	\vr(t,x) = \big< \mathcal{V}_{t,x}, \tvr \big>, \quad \vm(t,x) = \big< \mathcal{V}_{t,x}, \tvm \big>,\quad S(t,x) = \Big< \mathcal{V}_{t,x}, \widetilde{S} \Big>
	\]
	represent a (generalized) weak solution satisfying \eqref{E6}--\eqref{E9}.
	
	Then
	\[
	\mathcal{V}_{t,x} = \delta_{[\vr(t,x), \vm(t,x), S(t,x)]},\ \mathfrak{C} = 0,
	\]
	in particular the convergence of the consistent approximations is strong and a.a. (up to a subsequence).
\end{Proposition}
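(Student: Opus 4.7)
The plan is to exploit the strict convexity of the total energy $E(\vr, \vm, S)$ on its effective domain, combined with a two-sided ``energy sandwich'' produced by the DMV energy compatibility inequality \eqref{d5} from above, and by the assumed energy conservation \eqref{E9} of the expected-value limit from below.

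\textbf{Step 1 (Jensen).} Since $E$ is a convex lower semicontinuous function of the conservative entropy variables $(\vr, \vm, S)$, strictly convex on its effective domain (hypothesis of thermodynamic stability), Jensen's inequality applied to the Young measure $\mathcal{V}_{t,x}$ gives
$$ E\bigl(\vr(t,x), \vm(t,x), S(t,x)\bigr) \;=\; E\bigl(\YM{\tvr}, \YM{\tvm}, \YM{\widetilde{S}}\bigr) \;\leq\; \YM{E(\tvr, \tvm, \widetilde{S})} \quad \text{for a.a.\ } (t,x). $$

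\textbf{Step 2 (Energy sandwich).} Integrating Step~1 in $x$ and combining with the DMV energy compatibility condition \eqref{d5} and the hypothesized energy conservation \eqref{E9} for the expected-value weak solution yields, for a.a.\ $t \in (0,T)$,
$$ \intO{ E(\vr_0, \vm_0, S_0) } = \intO{ E(\vr, \vm, S)(t,\cdot) } \leq \intO{ \YM{E(\tvr, \tvm, \widetilde{S})} } \leq \intO{ E(\vr_0, \vm_0, S_0) } - r(d,\gamma) \int_{\Ov{\Omega}} \D \mathrm{trace}[\mathfrak{C}](t). $$
Since $\mathfrak{C}$ is positive semi-definite (so its trace is a non-negative Borel measure) and $r(d,\gamma) > 0$, this chain forces \emph{both} $\mathfrak{C} \equiv 0$ \emph{and} equality in the integrated Jensen inequality.

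\textbf{Step 3 (Dirac collapse).} Equality in the integrated Jensen inequality combined with the pointwise inequality from Step~1 and the strict convexity of $E$ on its effective domain forces $\mathcal{V}_{t,x}$ to be a Dirac mass concentrated at $(\vr(t,x), \vm(t,x), S(t,x))$ for a.a.\ $(t,x)$. Once the Young measure reduces to a Dirac and the concentration defect vanishes, standard arguments (weak convergence plus convergence of a strictly convex moment, together with the uniform bounds inherited from \eqref{ca2} and the minimum entropy principle \eqref{mini}, as in Remark \ref{Rr1}) upgrade the weak convergences \eqref{gen1} to strong $L^p$ convergence and, up to a further subsequence, to a.a.\ pointwise convergence of $(\vre, \vme, S_\ep)$ to $(\vr, \vm, S)$.

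\textbf{Main obstacle.} The principal technical care-point is the behavior on the degenerate set $\{\tvr = 0\}$, where $E$ takes the value $+\infty$ except when simultaneously $\tvm = 0$ and $\widetilde{S} \leq 0$; the integrability provided by \eqref{ca2} forces the Young measure to be supported on the effective domain where strict convexity genuinely applies, so that the Jensen inequality is meaningful and the equality case yields a Dirac mass. A secondary subtlety is extending the pointwise-in-$t$ Jensen/energy argument to a statement valid at all $t$, which is done via the temporal regularity \eqref{dd2} of the expected values and by choosing Lebesgue points of the energy functional.
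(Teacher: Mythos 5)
Your argument is correct and is essentially the proof the paper intends: the paper itself defers to \cite[Chapter 7, Theorem 7.5]{FeLMMiSh}, whose argument is precisely your decomposition of the energy defect into the (Jensen) oscillation defect plus the concentration defect --- the same decomposition the paper writes out in \eqref{d6} --- with the assumed energy conservation \eqref{E9} squeezing both to zero, positive semi-definiteness killing $\mathfrak{C}$, and strict convexity collapsing the Young measure to a Dirac mass. The vacuum subtlety you flag is genuine but resolves as you indicate: the minimum entropy principle \eqref{mini} together with finiteness of the energy moment pins the measure on $\{\tvr=0\}$ to the single point $(0,0,0)$, and $E$ remains strictly convex along rays emanating from that point.
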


\noindent
For the proof see \cite[Chapter 7, Theorem 7.5]{FeLMMiSh}.

\begin{Definition}[{\bf Oscillatory consistent approximations}] \label{Dac2}
	
	Let $(\vre, \vme, S_\ep)_{\ep > 0}$ be a consistent approximation of the Euler system in $(0,T) \times \Omega$.
	We say the approximation is \emph{oscillatory} if it generates a DMV solution such that at least one of the following statements holds:
	\[
	\{ \mathcal{V}_{t,x} \}  \ne \{ \delta_{[\vr(t,x), \vm(t,x), S(t,x)]} \} \ \mbox{or} \
	\mathfrak{C} \ne 0.
	\]	
	
\end{Definition}

It follows from Proposition \ref{Pca1} that DMV solutions generated by oscillatory consistent approximations cannot be weak solutions
of the Euler system. This fact, combined Theorem \ref{dT1}, implies that limits of oscillatory consistent approximations cannot be maximal DMV solutions, meaning
they violate Dafermos' maximal dissipation criterion specified in Definition \ref{Dd1}. Our ultimate goal is to show a refined version of this statement.
To this end we restrict the class of DMV solutions to those that can be obtained as a limit of a consistent approximations.

\begin{Definition}[{\bf Computable DMV solution}] \label{DD1}
	
	We say that a DMV solution $\mathcal{V}_{t,x}$, $\mathfrak{C}$ is \emph{computable} if there exists a family of consistent approximations
	$(\vre, \vme, S_\ep)_{\ep > 0}$ generating $\mathcal{V}_{t,x}$, $\mathfrak{C}$
	in the sense specified in \eqref{gen1}, \eqref{gen2}.	
	
\end{Definition}

Apparently, the set of computable DMV solutions is smaller than the set of all DMV solutions. For instance, the
set of all DMV solutions starting from the same initial data is convex while convexity of the set of computable solutions is not obvious.
Next, we modify accordingly maximal dissipation criterion in the class of computable solutions.

\begin{Definition}[{\bf Maximal computable DMV solution}] \label{DDd1}	
	A DMV solution $\{ \mathcal{V}_{t,x} \}, \mathfrak{C}$ is called \emph{maximal computable}  if the associated trio of
	expected values $(\vr, \vm, S)$ is maximal with respect to the order relation
	$\prec_{\rm D}$ in the class of all computable DMV solutions emanating from the same initial data.
\end{Definition}	

Obviously, the class of maximal computable DMV solutions is larger than the class of maximal solutions, meaning there is a smaller set
of possible candidates that may dominate a given solution with respect to the order relation $\prec_{\rm D}$.
Our second main result reads:

\begin{mdframed}[style=MyFrame]
	\begin{Theorem}[{\bf Non-maximality of oscillatory approximations}] \label{cadT}~\newline
		Let $(\vre, \vme, S_\ep)_{\ep > 0}$ be an oscillatory consistent approximation generating a DMV solution of the Euler system
		with the expected values of the conservative-entropy variables $(\vr, \vm, S)$.
		
		Then $(\vr, \vm, S)$ is not a weak solution of the Euler system, not even in its generalized form \eqref{E6}--\eqref{E9}.
		Moreover,
		there exists an infinite family  $\{ \mathcal{V}^n_{t,x} \}$, $\mathfrak{C}^n$ of computable DMV solutions such that the associated expected values
		$(\vr^n, \vm^n, S^n)$ satisfy
		\[
		(\vr, \vm, S) \prec_{\rm D}
		(\vr^n, \vm^n, S^n) \prec_{\rm D} (\vr^{n+1}, \vm^{n+1}, S^{n+1}) ,\ n = 1,2,\dots
		\]	
		In particular, the limit solution is not maximal computable in the sense of Definition~\ref{DDd1}.
		
	\end{Theorem}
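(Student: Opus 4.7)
The plan is to proceed in three main steps, combining the oscillation-detection afforded by Proposition \ref{Pca1}, the non-maximality mechanism of Theorem \ref{dT1}, and a computability-preserving restart construction.

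First I would dispose of the non-weak-solution claim by a direct application of Proposition \ref{Pca1}. Suppose, for contradiction, that $(\vr,\vm,S)$ satisfies the generalized Euler system \eqref{E6}--\eqref{E9}. Proposition \ref{Pca1} then forces $\mathcal{V}_{t,x} = \delta_{[\vr(t,x),\vm(t,x),S(t,x)]}$ and $\mathfrak{C}=0$, contradicting oscillatoriness in the sense of Definition \ref{Dac2}. In particular, by Theorem \ref{dT1} the DMV solution is not maximal; what remains is to exhibit an infinite $\prec_D$-chain inside the smaller class of \emph{computable} DMV solutions.

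Second, I would construct the dominators via iterated viscous restarts. By oscillatoriness combined with the compatibility \eqref{d5} and strict convexity of $E$, the set of times $\tau \in (0,T)$ at which the energy defect $\intO{E(\vr_0,\vm_0,S_0)} - \intO{E(\vr,\vm,S)(\tau,\cdot)}$ is strictly positive has positive measure. Pick a strictly decreasing sequence $\tau_1 > \tau_2 > \dots > 0$ of such times. Applying Proposition \ref{Pd1} with initial data $(\vr(\tau_n,\cdot),\vm(\tau_n,\cdot),S(\tau_n,\cdot))$---admissible by Remark \ref{Rr1}---I obtain a DMV solution on $[\tau_n,T]$ generated by the vanishing viscosity scheme of \cite{BreFeiHof19C}, and hence computable. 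I then define $(\vr^n,\vm^n,S^n)$ by gluing the original DMV on $[0,\tau_n]$ with the viscous restart on $[\tau_n,T]$. The desired chain $(\vr,\vm,S) \prec_D (\vr^1,\vm^1,S^1) \prec_D (\vr^2,\vm^2,S^2) \prec_D \dots$ then follows by pairwise comparison at $\tau_n$: since $\tau_n < \tau_{n-1}$, both $(\vr^{n-1},\vm^{n-1},S^{n-1})$ and $(\vr^n,\vm^n,S^n)$ coincide with the original DMV on $[0,\tau_n]$, while at $\tau_n^+$ only $(\vr^n,\vm^n,S^n)$ is already in restart mode, producing strictly more entropy than the original.

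The main obstacle is twofold. The harder point is verifying that the glued solution is computable on all of $[0,T]$. I would construct the required consistent approximation by pasting $(\vre,\vme,S_\ep)$ on $[0,\tau_n]$ to a vanishing viscosity family $(\vr^\eta,\vm^\eta,S^\eta)$ on $[\tau_n,T]$ along a diagonal $\eta=\eta(\ep) \to 0$, inserting a short time-mollification layer of width $\delta(\ep) \to 0$ centered at $\tau_n$ that absorbs the mismatch between the weakly-only-convergent trace $(\vre(\tau_n),\vme(\tau_n),S_\ep(\tau_n))$ and the strong initial datum of the restart. Calibrating $(\ep,\eta,\delta)$ so that all error terms in \eqref{ca2}--\eqref{ca5} vanish jointly, while the strict entropy improvement at $\tau_n^+$ survives the passage to the limit, is the technical heart of the argument. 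The strict entropy gain itself should come from the same mechanism underlying Theorem \ref{dT1}: the positive energy defect encodes ``trapped'' kinetic energy that the dissipative restart necessarily converts into additional entropy production at the restart time.
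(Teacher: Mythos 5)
Your overall architecture matches the paper's: reduce to Proposition \ref{Pca1} for the first claim, locate times of positive energy defect, and build the chain by restarting (concatenating) at a decreasing sequence of such times, with computability preserved by concatenating the generating sequences. However, there is one genuine gap at the heart of the argument: your restart uses the \emph{unchanged} initial data $(\vr(\tau_n,\cdot),\vm(\tau_n,\cdot),S(\tau_n,\cdot))$, and you then assert that the viscous restart ``necessarily converts'' the trapped energy into additional entropy production at $\tau_n^+$. Nothing guarantees this: a DMV solution restarted from the same expected values may itself be oscillatory, carry its own energy defect, and have an entropy production rate at $\tau_n^+$ no larger than that of the original solution. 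The strict inequality $\frac{\D^+}{\dt}\intO{S^n(\tau_n,\cdot)} > \frac{\D^+}{\dt}\intO{S(\tau_n,\cdot)}$ required by $\prec_D$ is therefore not established. The paper's mechanism is different and essential: since $D_E(\tau_n)>0$ and $E$ is strictly increasing in $S$, one may restart from an entropy $S^2_0$ that is \emph{strictly larger} than $S(\tau_n,\cdot)$ pointwise while still satisfying the energy budget \eqref{cp3}; the concatenated total entropy then jumps at $\tau_n$, so its right derivative is $+\infty$, which strictly dominates the finite derivative of the original provided $\tau_n$ is also chosen among the (full-measure set of) differentiability points of $t\mapsto\intO{S(t,\cdot)}$ --- a condition you also omit.

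Two smaller remarks. First, your time-mollification layer at the junction is unnecessary machinery: Definition \ref{Dca1} already allows the consistency errors to be measures on $[0,T]$, so the raw pasting of the two generating sequences merely adds a Dirac term $\delta_{\tau_n}\intO{(\vr^1_\ep(\tau_n-,\cdot)-\vr^2_0)\varphi}$ (and analogues) to the error, which vanishes as $\ep\to 0$ by convergence of the traces; for the entropy one needs $S^2_0\geq S^1(\tau_n,\cdot)$ plus Helly's theorem, at the price of excluding an at most countable exceptional set of concatenation times (the paper's Lemma \ref{LLdv1}). Second, your deduction that the energy defect is positive on a set of times of positive measure is correct, but it should be routed through \eqref{d6}: oscillatoriness plus strict convexity of $E$ (or positive semi-definiteness of $\mathfrak{C}$) makes the oscillation or concentration defect positive on a set of positive measure, and $D_E$ dominates both.
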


\end{mdframed}	

The rest of the paper is devoted to the proofs of Theorems \ref{dT1}, \ref{cadT}.

\section{Proof of Theorem \ref{dT1}}
\label{vd}

We start by rewriting the energy compatibility condition \eqref{d5} in the form
\begin{align}
	&\intO{ E(\vr_0, \vm_0, S_0) } -
	\intO{ E\Big( \big< \mathcal{V}_{t,x};\tvr \big>, \big< \mathcal{V}_{t,x};\tvm \big>, \Big< \mathcal{V}_{t,x};\widetilde{S} \Big> \Big) } \br
	\geq& \intO{ \Big[ \Big< \mathcal{V}_{t,x};E(\tvr, \tvm, \widetilde{S}) \Big> - E\Big( \big< \mathcal{V}_{t,x};\tvr \big>, \big< \mathcal{V}_{t,x};\tvm \big>, \Big< \mathcal{V}_{t,x};\widetilde{S} \Big> \Big)\Big] }	\br
	&+  r(d, \gamma) \int_{\Ov{\Omega}} \ \D {\rm trace}[ \mathfrak{C}](t, \cdot).
	\label{d6}
\end{align}	
The first term on the right-hand side,
\[
\intO{ \Big[ \Big< \mathcal{V}_{t,x};E(\tvr, \tvm, \widetilde{S}) \Big> - E\Big( \big< \mathcal{V}_{t,x};\tvr \big>, \big< \mathcal{V}_{t,x};\tvm \big>, \Big< \mathcal{V}_{t,x};\widetilde{S} \Big> \Big)\Big] }
\]
is called \emph{oscillation defect} and it is non-negative a.a. in $(0,T) \times \Omega$ by virtue of Jensen's inequality.
The second one is \emph{concentration defect} and it is non-negative because of positive semi-definiteness of the tensor $\mathfrak{C}$.
Accordingly, we call the term
\begin{align}
	\label{defect}
	D_{\rm E}(t) &= \intO{ E(\vr_0, \vm_0, S_0) } - \intO{ E\Big( \big< \mathcal{V}_{t,x};\tvr \big>, \big< \mathcal{V}_{t,x};\tvm \big>, \Big< \mathcal{V}_{t,x};\widetilde{S} \Big> \Big) } \br &\equiv
	\intO{ E(\vr_0, \vm_0, S_0) } - \intO{ E( \vr, \vm, S )(t, \cdot) }
\end{align}
\emph{energy defect}. Let us recall our agreement that the mean values
\[
\vr(t,x) = \big< \mathcal{V}_{t,x}, \tvr \big>, \ \vm(t,x) = \big< \mathcal{V}_{t,x}, \tvm \big>,\ S(t,x) = \Big< \mathcal{V}_{t,x}, \widetilde{S} \Big>
\]
are interpreted as c\` agl\` ad vector valued functions of time.

Unlike the right-hand side of \eqref{d6}, the energy defect $D_{\rm E}$ is well-defined for \emph{any} $t \geq 0$. Indeed, by virtue of weak continuity properties stated in \eqref{dd2},
convexity of the energy function $E$, and the fact $E$ is increasing in $S$, we get
\begin{align}
	&\intO{ E\Big( \big< \mathcal{V}_{\tau, x} ;\tvr \big>, \big< \mathcal{V}_{\tau, x} ;\tvm \big> , \Big< \mathcal{V}_{\tau, x} ;\widetilde{S} \Big> \Big) }\br
	\leq& \intO{ E\Big( \big< \mathcal{V}_{\tau, x} ;\tvr \big>, \big< \mathcal{V}_{\tau, x} ;\tvm \big> , \Big< \mathcal{V}_{\tau \pm, x} ;\widetilde{S} \Big> \Big) }
	\br \leq& \liminf_{t \to \tau\pm} \intO{ E\Big( \big< \mathcal{V}_{t,x};\tvr \big>, \big< \mathcal{V}_{t,x};\tvm \big>, \Big< \mathcal{V}_{t,x};\widetilde{S} \Big> \Big) }
	\notag
\end{align}
for any $\tau > 0$.
Thus we conclude that
\[
D_{\rm E}: [0, \infty) \to [0, \infty) \ \mbox{is a non-negative u.s.c. function}.
\]
Applying the same argument at $\tau = 0$, we obtain
\[
D_{\rm E}(0) = \intO{ E(\vr_0, \vm_0, S_0) } - \intO{ E( \vr_0, \vm_0, S_0 ) } = 0.
\]
Consequently, we may infer that
\begin{equation} \label{vvd1}
	D_{\rm E}: [0, \infty) \to [0, \infty) \ \mbox{is a non-negative u.s.c. function},\
	\lim_{t \to 0+} D_{\rm E}(t) = D_{\rm E}(0) = 0.
\end{equation}

\subsection{Concatenation property for DMV solutions}
\label{cp}

A crucial role in the proof of Theorem \ref{dT1} is the concatenation property
for DMV solutions.

\begin{Lemma}[{\bf Concatenation of DMV solutions}] \label{Ldv1}
	
	Let $\{ \mathcal{V}^i_{t,x} \}, \mathfrak{C}^i$ be two DMV solutions of the Euler system in $(0, T_i) \times \Omega$, with the initial data
	$(\vr_0^i, \vm_0^i, S^i_0)$, $i=1,2$ respectively.
	Suppose that
	\begin{equation} \label{cp1}
		\vr^1(T_1, \cdot) = \lim_{t \to T_1 -} \big< \mathcal{V}^1_{t,x}, \tvr \big> = \vr^2_0, \ \vm^1(T_1, \cdot) = \lim_{t \to T_1 -} \big< \mathcal{V}^1_{t, x}; \tvm \big> = \vm^2_0,
	\end{equation}	
	\begin{equation} \label{cp2}
		S^1(T_1, \cdot) = \lim_{t \to T_1-} \big< \mathcal{V}^1_{t,x}; \widetilde{S} \big>  \leq S^2_0,
	\end{equation}
	and
	\begin{equation} \label{cp3}
		\intO{ E (\vr^2_0, \vm^2_0, S^2_0 ) } \leq 	\intO{ E (\vr^1_0, \vm^1_0, S^1_0 ) }.
	\end{equation}	
	
	Then the concatenated solution $(\{ \mathcal{V}_{t,x} \}, \mathfrak{C}) = (\{\mathcal{V}^1_{t,x}\} ,\mathfrak{C}^1)  \cup_{T_1} (\{ \mathcal{V}^2_{t,x} \}, \mathfrak{C}^2) $ defined as
	\[
	\mathcal{V}_{t,x} = 
	\begin{cases}
		\mathcal{V}^1_{t,x} \ &\mbox{ for }\ 0 \leq t \leq T_1, \\
		\mathcal{V}^2_{t - T_1,x} \ &\mbox{ for }\ T_1 < t \leq T_1 + T_2
	\end{cases},\
	\mathfrak{C}(t) = 
	\begin{cases}
		\mathfrak{C}^1(t) \ &\mbox{ for }\ 0 \leq t \leq T_1, \\
		\mathfrak{C}^2(t - T_1) \ &\mbox{ for }\ T_1 < t \leq T_1 + T_2
	\end{cases}
	\]	
	is a DMV solution of the Euler system in $(0, T_1 + T_2) \times \Omega$, with the initial data $(\vr^1_0, \vm^1_0, S^1_0)$.
	
\end{Lemma}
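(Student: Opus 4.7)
The strategy is to verify each of the four conditions of Definition \ref{Dd2} for the concatenated pair $(\mathcal{V}_{t,x}, \mathfrak{C})$. The key observation is that every weak identity in Definition \ref{Dd2} can be recast in a ``trace form'' between two times. Indeed, by plugging test functions of the form $\varphi(x)\chi(t)$ with $\chi$ a smooth cutoff of $\ind_{[0,\tau]}$ and invoking the weak/BV time regularity recorded in \eqref{dd2}, the continuity equation \eqref{d2} for $\mathcal{V}^1$ is equivalent to
\[
\intO{ \left<\mathcal{V}^1_{\tau-, x}; \tvr\right> \varphi } - \intO{ \vr_0^1 \varphi } = \int_0^\tau \intO{ \left<\mathcal{V}^1_{t,x}; \tvm\right> \cdot \Grad \varphi } \dt
\]
for every $\tau \in [0, T_1]$ and every $\varphi \in C^1(\Ov{\Omega})$; analogous identities hold for the momentum equation and, with $\leq$ in place of $=$ and $\varphi \geq 0$, for the entropy inequality.

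Given then an arbitrary test function $\varphi$ on $[0, T_1+T_2)\times \Ov{\Omega}$, I would apply the trace identity for $\mathcal{V}^1$ at $T_1$ together with the time-shifted trace identity for $\mathcal{V}^2$ at $\tau - T_1$ (for $\tau \in (T_1, T_1+T_2]$), and add the two contributions. The intermediate boundary terms at $t=T_1$ telescope and vanish for the continuity and momentum equations in view of the matching hypothesis \eqref{cp1}, producing \eqref{d2} and \eqref{d3} for the concatenated solution. In the entropy inequality, the residual contribution takes the form
\[
\intO{ \Big(S_0^2 - \left<\mathcal{V}^1_{T_1-, x}; \widetilde{S}\right>\Big)\, \varphi(T_1,\cdot) },
\]
which is nonnegative because $\varphi \geq 0$ and \eqref{cp2} provides precisely the right sign; hence \eqref{d4} follows for $\mathcal{V}_{t,x}$.

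For the energy compatibility \eqref{d5} I would split on $\tau$. If $\tau \leq T_1$, the inequality is inherited verbatim from DMV\,1. If $\tau > T_1$, I apply \eqref{d5} for DMV\,2 at time $\tau - T_1$ to bound the sum of expected energy and weighted concentration trace by $\intO{ E(\vr_0^2, \vm_0^2, S_0^2) }$, and then chain with hypothesis \eqref{cp3} to replace the right-hand side by $\intO{ E(\vr_0^1, \vm_0^1, S_0^1) }$, as required. The main (mild) technical point will be the rigorous justification of the time cutoff at $T_1$ and the handling of the one-sided traces at that instant; this is routine given the weak-in-time continuity of $\vr$, $\vm$ and the BV regularity of $S$ recorded in \eqref{dd2}, and no further structural hypothesis on the two DMV pieces is needed.
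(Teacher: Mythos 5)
Your proposal is correct and follows essentially the same route as the paper: the paper's own proof is a two-line sketch stating that the integral identities \eqref{d2}--\eqref{d4} follow from \eqref{cp1}, \eqref{cp2} and that the energy compatibility \eqref{d5} follows from \eqref{cp3}, and your argument simply fills in those details (trace form of the weak identities, telescoping of the interface terms via \eqref{cp1}, sign of the entropy residual via \eqref{cp2}, and the chained energy bound via \eqref{cp3}). No discrepancy to report.
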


\begin{proof}
	The proof the integral identities \eqref{d2}--\eqref{d4} is straightforward and follows from \eqref{cp1}, \eqref{cp2}. As for the energy compatibility condition \eqref{d5}, it follows directly from \eqref{cp3}.	
\end{proof}

\subsection{Vanishing energy defect for admissible solutions}
\label{proof}

We are ready to prove Theorem \ref{dT1}. Let $\{ \mathcal{V}_{t,x} \}$, $\mathfrak{C}$ be a DMV solution of the Euler system in $(0,T) \times \Omega$.
Since the total entropy
\[
t \in (0,T) \mapsto \intO{ S(t, \cdot) } = \intO{ \Big< \mathcal{V}_{t,x}, \widetilde{S} \Big> }
\]
is a non-decreasing function, it is differentiable a.a. in $(0,T)$.

Let $\tau \in (0,T)$ be a point, where $t \mapsto \intO{ \Big< \mathcal{V}_{t,x};\widetilde{S} \Big>}$ is differentiable.
This means
\begin{equation} \label{vd2}
	0 \leq	\frac{\D^+}{\dt}
	\intO{ \Big< \mathcal{V}_{t,x}; \widetilde{S} \Big> } < \infty \ \mbox{for}\ t = \tau.
\end{equation}
Suppose, in addition, that the energy defect $D_{\rm E}(\tau)$ is positive,
\[	
D_{\rm E}(\tau) > 0.
\]	
We show that such a solution cannot be maximal dissipative, which completes the proof of Theorem~\ref{dT1}. Indeed, by virtue of Lemma \ref{Ldv1}, we can concatenate this solution at the time $\tau$ with a another one emanating
from the initial data
\[
\vr^2_0 (x) = \vr(\tau, x) = \big< \mathcal{V}_{\tau, x}; \tvr \big>,\ \vm^2_0 (x) = \vm(\tau, x) = \big< \mathcal{V}_{\tau,x}; \tvm \big>
\ \mbox{for a.a.}\ x \in \Omega.
\]
and
\begin{equation} \label{vd3}
	S^2_0(x) > S(\tau, x) = \Big< \mathcal{V}_{\tau, x}; \widetilde{S} \Big>
	\ \mbox{for a.a.}\ x \in \Omega.
\end{equation}
The fact that $S^2_0$ can be chosen to satisfy the strict inequality \eqref{vd3} follows from the strict positivity of the defect. As
\begin{align}
	\intO{ E(\vr_0, \vm_0, S_0) } &> \intO{ E\Big( \big< \mathcal{V}_{\tau, x}; \tvr \big> ,  \big< \mathcal{V}_{\tau,x}; \tvm \big> , \Big< \mathcal{V}_{\tau, x} ;\widetilde{S} \Big> \Big) } \br &=  \intO{ E\Big( \vr^2_0 , \vm^2_0  , \Big< \mathcal{V}_{\tau, x} ;\widetilde{S} \Big> \Big) },\notag
\end{align}
and the total energy is increasing in $S$, we can choose $S_0$ to satisfy both \eqref{vd3}, and \eqref{cp3}, meaning,
\[
\intO{ E\big(\vr_0, \vm_0, S_0\big) } \geq \intO{ E\Big( \vr^2_0 , \vm^2_0  , S^2_0 \Big) },
\]
which is nothing other than \eqref{cp3}.
Thanks to \eqref{vd3}, the concatenated solution $\mathcal{V}_{t,x} \cup_{\tau} \mathcal{V}^2_{t,x}$ satisfies
\[
\frac{\D^+}{\dt}
\intO{ \Big< \mathcal{V}^2_{t,x}; \widetilde{S} \Big> } = \infty \ \mbox{for}\ t = \tau,
\]
meaning it dominates the original one in the sense of $\prec_{\rm D}$.
We have completed the proof of Theorem \ref{dT1}.

As the same construction can be repeated at a.a. point of positivity of
$D_{\rm E}$, we conclude there are, in fact, infinitely many ordered DMV solutions. Finally, going back to Corollary \ref{dC1}, suppose 
the Euler system admits two (weak) solutions.
Then their convex combination is a DMV solution with an energy defect $D_{\rm E}$ positive on a set of non-zero Lebesgue measure. Thus the construction described above yields the ordered sequence of DMV solutions claimed in Corollary \ref{dC1}.

\section{Proof of Theorem \ref{cadT}}
\label{ppr}

The proof of Theorem \ref{cadT} follows the same lines as that of Theorem \ref{dT1} as soon as we establish two necessary ingredients:
\begin{enumerate}
	\item Global existence of computable DMV solutions for any finite energy initial data.
	\item Showing that concatenation of two computable DMV solutions is computable.
	
\end{enumerate}

As for (1), a short inspection of the proof of \cite[Proposition 3.8]{BreFeiHof19C} reveals that the desired solution is constructed as a limit
of consistent approximations $(\vre, \vme = \vre \vue, S_\ep = \vre s_\ep)_{\ep > 0}$ solving the problem
\begin{align}
	\partial_t \vre + \Div (\vre \vue) &= 0, \br
	\partial_t (\vre \vue) + \Div (\vre \vue \otimes \vue) + \Grad p(\vre, s_\ep) &= \ep \mathcal{L}[\vue],\br
	\partial_t s_\ep + \vue \cdot \Grad s_\ep &= 0,
	\notag
\end{align}	
where $\mathcal{L}$ is a suitable 3-rd order elliptic operator.

As for (2), we show the following analogue of Lemma \ref{Ldv1}.
\begin{Lemma}[{\bf Concatenation of computable DMV solutions}] \label{LLdv1}
	
	Let $\{ \mathcal{V}^i_{t,x} \}, \mathfrak{C}^i$ be two computable DMV solutions of the Euler system in $[0, T] \times \Omega$, with the initial data
	$(\vr_0^i, \vm_0^i, S^i_0)$, $i=1,2$ respectively. Let $0 < T_1 < T$ be given. In addition,
	suppose that
	\begin{equation} \label{Lcp1}
		\vr^1(T_1, \cdot) = \lim_{t \to T_1 -} \big< \mathcal{V}^1_{t,x}, \tvr \big> = \vr^2_0, \ \vm^1(T_1, \cdot) = \lim_{t \to T_1 -} \big< \mathcal{V}^1_{t, x}; \tvm \big> = \vm^2_0,
	\end{equation}	
	\begin{equation} \label{Lcp2}
		S^1(T_1, \cdot) = \lim_{t \to T_1-} \Big< \mathcal{V}^1_{t,x}; \widetilde{S} \Big>  \leq S^2_0,
	\end{equation}
	and
	\begin{equation} \label{Lcp3}
		\intO{ E (\vr^2_0, \vm^2_0, S^2_0 ) } \leq 	\intO{ E (\vr^1_0, \vm^1_0, S^1_0 ) }.
	\end{equation}	
	
	Then there exists at most countable set $\mathcal{T} \subset (0,T)$ such that for any
	$T_1 \in (0,T) \setminus \mathcal{T}$,
	the concatenated solution  $(\{ \mathcal{V}_{t,x} \}, \mathfrak{C}) = (\{\mathcal{V}^1_{t,x}\} ,\mathfrak{C}^1)  \cup_{T_1} (\{ \mathcal{V}^2_{t,x} \}, \mathfrak{C}^2) $
	defined as
	\[
	\mathcal{V}_{t,x} = 
	\begin{cases}
		\mathcal{V}^1_{t,x} \ &\mbox{ for }\ 0 \leq t \leq T_1, \\
		\mathcal{V}^2_{t - T_1,x} \ &\mbox{ for }\ T_1 < t \leq T_1 + T
	\end{cases},\
	\mathfrak{C}(t) = 
	\begin{cases}
		\mathfrak{C}^1(t) \ &\mbox{ for }\ 0 \leq t \leq T_1, \\
		\mathfrak{C}^2(t - T_1) \ &\mbox{ for }\ T_1 < t \leq T_1 + T
	\end{cases}
	\]	
	is again a computable DMV solution of the Euler system in $[0, T_1 + T] \times \Omega$, with the initial data $(\vr^1_0, \vm^1_0, S^1_0)$.
	
\end{Lemma}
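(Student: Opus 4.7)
The approach mirrors the proof of Lemma \ref{Ldv1} but with the additional burden of exhibiting a single family of consistent approximations that generates the concatenated measure $(\mathcal{V},\mathfrak{C})$. Let $(\vre^i,\vme^i,S^i_\ep)_{\ep>0}$, $i=1,2$, be consistent approximations generating $\mathcal{V}^i$, $\mathfrak{C}^i$. After extracting a common subsequence in $\ep$, I piece these together at time $T_1$ to form a concatenated approximation $(\tilde\vre,\tilde\vme,\tilde S_\ep)$ on $[0,T_1+T]$ defined piecewise in the same manner as the lemma statement does for the Young measures.

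First, I identify the exceptional set $\mathcal{T}$. The scalar functions $t\mapsto\intO{\vre^i(t)\varphi}$ and their analogues for $\vme^i$ and $S^i_\ep$ are of bounded variation on $[0,T]$ with uniformly bounded total variation by \eqref{ca3}--\eqref{ca5}. Applying Helly's selection principle, together with the weak continuity of the limits $\vr^i$ and $\vm^i$ established in \eqref{dd2}, yields pointwise-in-$t$ weak convergence at every $t\in[0,T]$ after extraction of a further subsequence. For the entropy, the limit $S^i$ is only weakly c\`agl\`ad in time, so I take $\mathcal{T}$ to be the at most countable union of its jump points. Away from $\mathcal{T}$, the left limits $\vre^i(T_1-)$, $\vme^i(T_1-)$, $S_\ep^i(T_1-)$ converge weakly to $\vr^i(T_1)$, $\vm^i(T_1)$, $S^i(T_1)$.

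Next, for $T_1\notin\mathcal{T}$, I verify that $(\tilde\vre,\tilde\vme,\tilde S_\ep)$ satisfies the identities of Definition \ref{Dca1}. Splitting the integrals over $[0,T_1+T]$ at $T_1$ and assembling the identities for the two pieces yields the required formulas modulo a point mass at $T_1$ of weight $\intO{(\vr_0^2-\vre^1(T_1-))\varphi}\,\delta_{T_1}$ for the continuity equation and analogous terms for momentum and entropy. By hypothesis \eqref{Lcp1} combined with the pointwise weak convergence above, the density and momentum point masses tend to zero in total variation as $\ep\to 0$ and can be absorbed into the concatenated consistency errors $\tilde e^2_\ep$, $\tilde e^3_\ep$. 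The approximate total energy inequality \eqref{ca2} follows at once from \eqref{Lcp3}, since the initial energy of the second piece is dominated by that of the first.

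The main obstacle is the entropy inequality \eqref{ca5}: its directionality prevents us from absorbing the jump into a signed error without care. However, \eqref{Lcp2} combined with the weak convergence of $S^1_\ep(T_1-)$ gives $\intO{(S_0^2-S^1(T_1))\varphi}\ge 0$ for every $\varphi\ge 0$, so the limiting jump only reinforces the $\geq$ inequality. I absorb only the negative part of the jump at finite $\ep$, namely $\min\{0,\intO{(S_0^2-S^1_\ep(T_1-))\varphi}\}\,\delta_{T_1}$, into $\tilde e^4_\ep$; this quantity vanishes in total variation as $\ep\to 0$ precisely because the weak limit of the corresponding expression is non-negative. Finally, since $(\tilde\vre,\tilde\vme,\tilde S_\ep)$ coincides with $(\vre^i,\vme^i,S^i_\ep)$ on each subinterval, the generated Young measure and concentration defect agree with $(\mathcal{V},\mathfrak{C})$ piece by piece, confirming computability of the concatenated DMV solution.
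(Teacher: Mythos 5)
Your proposal is correct and follows essentially the same route as the paper: concatenating the generating sequences at $T_1$, absorbing the resulting Dirac masses at $T_1$ into the consistency errors (which vanish for density and momentum by \eqref{Lcp1}, and whose negative part vanishes for the entropy by \eqref{Lcp2} together with Helly-type pointwise convergence away from the countable set of temporal discontinuities of the limit entropy), and deducing the energy inequality directly from \eqref{Lcp3}. The only cosmetic difference is that you phrase the entropy step as "absorb only the negative part of the jump," whereas the paper bounds the jump from below by $\intO{(S^1(T_1,\cdot)-S^1_\ep(T_1-,\cdot))\varphi}$ and lets the nonnegative excess reinforce the inequality; these are the same argument.
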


\begin{proof}
	
	The idea is to construct the generating sequence by concatenating the generating sequences $(\vr^i_\ep, \vm^i_\ep, S^i_\ep)_{\ep > 0}$, $i=1,2$.
	Accordingly, we define
	\[
	(\vre, \vme, S_\ep) (t,\cdot) = 
	\begin{cases}
		(\vre^1, \vme^1, {S}_\ep^1)(t-, \cdot) \ &\mbox{ for }\ 0 \leq t \leq T_1 \\
		(\vre^2, \vme^2, S^2_\ep ((t - T_1)-, \cdot) \ &\mbox{ for }\ T_1 < t \leq T_1 + T.
	\end{cases}
	\]
	
	\textcolor{black}{Let us denote 
		\[
		e^{i,j}_{\ep}, \ \ i=1,2,3,4 \mbox{ and } j=1,2
		\]
		the consistency errors $e_\ep^i$, $i=1,2,3,4$ defined in \eqref{ca2}--\eqref{ca5} corresponding to the solution $\{ \mathcal{V}^j_{x,t} \}, $ $\mathfrak{C}^j$, $j=1,2$.  Similarly, the consistency error associated to the concatenated solution is denoted by $\tilde{e}_\ep^i$.
	}
	
	\noindent {\bf Step 1:}
	
	First observe that the approximate energy inequality \eqref{ca2} is satisfied with the consistency error
	\textcolor{black}{
		\[
		\tilde{e}^1_\ep = \max \{e^{1,1}_\ep, e^{1,2}_\ep \}.
		\]
	}
	Indeed
	\begin{align}
		\intO{ E(\vre, \vme, S_\ep )(\tau, \cdot) } &= \intO{ E(\vre^1, \vme^1, S^1_\ep )(\tau, \cdot) } \br
		&\leq \intO{ E(\vr_0^1, \vm_0^1, S_0^1) } + {\color{black} e^{1,1}_\ep} \ \mbox{ for a.a.}\ \tau \in (0,T_1),\notag
	\end{align}
	while
	\begin{align}
		\intO{ E(\vre, \vme, S_\ep )(\tau, \cdot) } &= \intO{ E(\vre^2, \vme^2, S^2_\ep )(\tau - T_1, \cdot) }\br
		&\leq \intO{ E(\vr^2_0, \vm^2_0, S^2_0) } + {\color{black} e^{1,2}_\ep} \br
		&\leq \intO{ E(\vr^1_0, \vm^1_0, S^1_0) } + {\color{black} e^{1,2}_\ep} 
		%\leq \intO{ E(\vr_0, \vm_0, S_0) } + {\color{black} e^{1,1}_\ep + e^{1,2}_\ep} \br
		\quad \mbox{ for a.a.}\ \tau \in (T_1,T_1 + T).
		\notag
	\end{align}
	
	\noindent {\bf Step 2:}
	
	Next, it is easy to check that the concatenated density $\vre$ satisfies the approximate equation of continuity \eqref{ca3} with the error
	\textcolor{black}{
		\begin{align}
			\tilde{e}^2_\ep [\varphi] &= \mathds{1}_{[0, T_1)} e^{2,1}_\ep [\varphi] + \mathds{1}_{[T_1, T_1 + T]} e^{2,2}_\ep (\cdot - T_1)[\varphi]  - \delta_{T_1}
			\intO{ \Big( \vr^1_\ep (T_1 -, \cdot) - \vr^2_0 \Big) \varphi } \br &=
			\notag \mathds{1}_{[0, T_1)} e^{2,1}_\ep [\varphi] + \mathds{1}_{[T_1, T_1 + T]} e^{2,2}_\ep (\cdot - T_1)[\varphi]  - \delta_{T_1}
			\intO{ \Big( \vr^1_\ep (T_1 -, \cdot) - \vr^1(T_1, \cdot) \Big) \varphi }.
		\end{align}
	}
	By virtue of \eqref{ca3},
	\[
	\intO{ \Big( \vr^1_\ep (T_1 \pm, \cdot) - \vr^1(T_1, \cdot) \Big) \varphi } \to 0 \ \mbox{as}\ \ep \to 0,
	\]
	and
	we may infer $(\vre, \vme)$ satisfy the approximate equation of continuity \eqref{ca3}. The same argument applies
	to the momentum equation \eqref{ca4}.
	
	\noindent {\bf Step 3:}
	As the concatenated consistent approximations obviously satisfy the entropy minimum principle \eqref{mini}, it remains to
	verify the entropy inequality \eqref{ca5}. Arguing as in Step~2 we deduce that concatenation produces an extra term
	in the entropy jump, namely
	\[
	\delta_{T_1} \intO{ \Big( S^2_0 - S^1_\ep (T_1-, \cdot) \Big) \varphi }, \ \varphi \geq 0
	\]
	on the right--hand side of the approximate entropy inequality \eqref{ca5}. Moreover, in view of \eqref{Lcp2},
	\[
	\delta_{T_1} \intO{ \Big( S^2_0 - S^1_\ep (T_1-, \cdot) \Big) \varphi } \geq \delta_{T_1} \intO{ \Big( S^1(T_1, \cdot) - S^1_\ep (T_1-, \cdot) \Big) \varphi }.
	\]
	Finally, as $(\vr^1_\ep, \vm^1_\ep, S^1_\ep)$ satisfies \eqref{ca5} in $(0,T)$, Helly's theorem implies
	\[
	\intO{ \Big( S^1(T_1, \cdot) - S^1_\ep (T_1-, \cdot) \Big) \varphi } \to 0
	\]	
	with a possible exception of a countable set $T_1 \in \mathcal{T}$ --
	the set of discontinuities of $t \mapsto \intO{ S^1 (t, \cdot) \varphi }$.
\end{proof}

With Lemma \eqref{LLdv1} at hand, the proof of Theorem \ref{cadT} can be performed in the same way as in Section \ref{proof}. Indeed the set of
times $\tau$ for which both \eqref{vd2} and concatenation of computable solutions hold is still of full measure in $(0,T)$.

\section{DMV solutions with small energy defect}
\label{def}

\textcolor{black}{Using the concatenation principle introduced in Section~\ref{cp}} we show  that there exist DMV solutions with arbitrarily small energy defect.

\begin{mdframed}[style=MyFrame]
	
	\begin{Theorem}[{\bf DMV solutions with small defect}] \label{Tdef}
		Let $\Omega \subset R^d$  and the initial data $(\vr_0, \vm_0, S_0)$ satisfy the hypotheses of Proposition \ref{Pd1}.
		Let $\delta > 0$ be given.
		
		Then there exists a DMV solution satisfying
		\[
		0 \leq D_{\rm E}(t) = \intO{ E(\vr_0, \vm_0, S_0) } - \intO{ E(\vr, \vm, S)(t, \cdot) } \leq \delta \ \mbox{for a.a.}\ t \in (0,T).
		\]	
		
	\end{Theorem}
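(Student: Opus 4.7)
The strategy is an iterative ``restart'' construction relying on the concatenation Lemma \ref{Ldv1}: whenever the energy defect of the current piece reaches the threshold $\delta$, I splice in a fresh DMV solution whose entropy datum has been raised by a suitable multiple of $\vr$ in order to restore the total energy to the initial value $E_0 := \intO{E(\vr_0,\vm_0,S_0)}$. Only finitely many restarts will be needed because each one injects a uniformly positive quantum of total entropy, while the total entropy itself is bounded above by a data-dependent constant.

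Starting from the DMV solution on $(0,T)$ provided by Proposition \ref{Pd1}, I set $\tau_1 := \inf\{t > 0 \,:\, D_E(t) \geq \delta\}$; since $D_E$ is u.s.c. with $D_E(0)=0$, we have $\tau_1 > 0$ and, by u.s.c.\ again, $D_E(\tau_1) \geq \delta$ (otherwise $\tau_1 \geq T$ and there is nothing to prove). The restart data are defined by $\vr_0^2 := \vr(\tau_1,\cdot)$, $\vm_0^2 := \vm(\tau_1,\cdot)$, $S_0^2 := S(\tau_1,\cdot) + c\,\vr(\tau_1,\cdot)$, with the constant $c>0$ calibrated so that $\intO{E(\vr_0^2,\vm_0^2,S_0^2)}=E_0$. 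Thanks to the explicit polytropic identity $\vr e(\vr,S)=(\gamma-1)^{-1}\vr^\gamma\exp(S/(c_v\vr))$, adding $c\vr$ to $S$ multiplies the internal energy density pointwise by $\exp(c/c_v)$, so that $c$ is determined by the scalar equation $(\exp(c/c_v)-1)\intO{(\vr e)(\tau_1,\cdot)}=D_E(\tau_1)$. Combined with the a priori bound $\intO{\vr e}(\tau_1,\cdot)\leq E_0$ and $D_E(\tau_1)\geq \delta$, this yields the reset-independent lower bound $c \geq c_\delta := c_v\ln(1+\delta/E_0) > 0$. All hypotheses of Lemma \ref{Ldv1} are in force (in particular $S_0^2\geq S(\tau_1,\cdot)$ and the entropy minimum principle is preserved), so concatenation with any DMV solution from the new data -- existence furnished by Proposition \ref{Pd1} -- produces an extended DMV solution.

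Iterating the procedure generates reset times $\tau_1<\tau_2<\cdots$. At each $\tau_k$ the total entropy $t \mapsto \intO{S(t,\cdot)}$ jumps upward by $c_k\intO{\vr(\tau_k,\cdot)}=c_k m_0\geq c_\delta m_0$, where $m_0:=\intO{\vr_0}$ is conserved throughout the construction. On the other hand, writing the polytropic entropy as $s=c_v\ln(e/\vr^{\gamma-1})+\text{const}$ (Gibbs' relation) and applying Jensen's inequality to the probability measure $\vr\,\dx/m_0$, together with the universal bound $\intO{(\vr e)(t,\cdot)}\leq \intO{E(t,\cdot)}\leq E_0$ valid in every piece, yields a finite upper bound $\intO{S(t,\cdot)} \leq S_{\max}$ depending only on $E_0$, $m_0$, $|\Omega|$, $c_v$ and $\gamma$. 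Consequently the procedure terminates after at most $N_{\max}:=\lceil(S_{\max}-\intO{S_0})/(c_\delta m_0)\rceil$ steps; beyond the last reset $D_E<\delta$ holds by definition of the reset times, and the exceptional set where $D_E$ may attain $\delta$ is contained in the finite collection $\{\tau_1,\ldots,\tau_N\}$, hence Lebesgue-negligible.

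The main obstacle is to guarantee that every reset injects a uniformly positive amount of total entropy. The Ansatz $\Delta S=c\vr$ tailored to the polytropic EOS is essential: it converts the threshold condition $D_E\geq\delta$ directly into the reset-independent lower bound $c\geq c_\delta$ via the pointwise multiplicative effect on $\vr e$, once combined with the universal bound $\intO{\vr e}\leq E_0$. Without such a structural choice, individual resets could a priori inject arbitrarily little entropy and the finiteness argument would collapse; all other steps -- verification of the concatenation hypotheses, persistence of the entropy minimum principle, and the upper bound on total entropy -- are routine once this quantitative lower bound is in hand.
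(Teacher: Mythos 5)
Your proof is correct, but it takes a genuinely different route from the paper's. The paper argues non-constructively: it partially orders the set $\mathcal{U}[\vr_0,\vm_0,S_0]$ of DMV solutions by the extension relation $\pprec$ (agreement up to the first time $T(\delta)$ at which the defect may exceed $\delta$), shows every chain has an upper bound using weak-(*) compactness of the solution set, invokes Zorn's lemma to extract a $\pprec$-maximal element, and then performs your restart step --- concatenation via Lemma \ref{Ldv1} with an entropy lift recalibrated so that the total energy returns to $\intO{E(\vr_0,\vm_0,S_0)}$, with $T(\delta)>0$ after the restart guaranteed by \eqref{vvd1} --- exactly once, to contradict maximality if $T(\delta)<T$. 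You replace Zorn's lemma by a quantitative counting argument: the polytropic identity $\vr e(\vr, S+c\vr) = e^{c/c_v}\,\vr e(\vr,S)$ converts the threshold $D_E(\tau_k)\ge\delta$ into the reset-independent lower bound $c_k \ge c_v\ln(1+\delta/E_0)$ on the entropy injected per restart, while concavity of $S$ in the conservative variables plus conservation of mass and the energy bound give a uniform upper bound on the total entropy, so at most finitely many restarts occur. What each approach buys: yours is constructive (no Axiom of Choice) and yields an explicit bound on the number of restarts, but it leans on the explicit polytropic EOS, whereas the paper's Zorn argument is indifferent to the equation of state, consistent with the paper's claim that its results extend to any EOS satisfying Gibbs' relation and thermodynamic stability. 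Two small points you should make explicit: the calibration equation for $c_k$ is solvable only if $\intO{(\vr e)(\tau_k,\cdot)}>0$, which follows from the entropy minimum principle \eqref{dd1} combined with $\intO{\vr(\tau_k,\cdot)}=m_0>0$; and the degenerate case $m_0=0$ should be dismissed as trivial.
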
	
	
\end{mdframed}

\begin{proof}
	The proof is similar to \cite[Section 3]{FeLmJu2025} based on Zorn's lemma (Axiom of Choice).
	Denote
	\begin{align}
		\mathcal{U}[\vr_0, \vm_0, S_0] &= \Big\{ (\vr, \vm, S) \ \mid
		\vr= \big< \mathcal{V}_{t,x}, \tvr \big>, \ \vm = \big< \mathcal{V}_{t,x}, \tvm \big>,\ S = \Big< \mathcal{V}_{t,x}, \widetilde{S} \Big>  \br
		&\mbox{expected values of a DMV solution starting from}\ (\vr_0, \vm_0, S_0)
		\Big\}.
		\notag
	\end{align}
	Let
	\[
	T(\delta)[\vr, \vm, S] = \sup \big\{ \tau \in [0,T]\ \mid \norm*{D_{\rm E}}_{L^\infty(0,\tau)} \leq \delta \big\}.
	\]
	We know from \eqref{vvd1} that $T(\delta) [\vr, \vm, S] > 0$.
	
	Next, we introduce the order relation
	\begin{align} \label{ordering}
		(\vr^1, \vm^1, S^1) &\prec \hspace{-0.2cm} \prec (\vr^2, \vm^2, S^2) \br
		&\Leftrightarrow_{\rm{def}} \br
		(\vr^2, \vm^2, S^2) &=  (\vr^1, \vm^1, S^1) \ \mbox{for}\ t \in [0, T(\delta)[\vr^1, \vm^1, S^1]].
	\end{align}
	It is easy to check that $\mathcal{U}[\vr_0, \vm_0, S_0]$ with the relation $\prec \hspace{-0.2cm} \prec$ is a partially ordered set. \textcolor{black}{Strictly speaking, the relation $\prec \hspace{-0.2cm} \prec$ is partial ordering on  the equivalent classes
		\begin{align*}
			(\vr^1, \vm^1, S^1 ) &\sim (\vr^2, \vm^2, S^2 ) \\
			&\Leftrightarrow_{\rm{def}} \\
			T(\delta)[\vr^1, \vm^1, S^1] &= T(\delta)[\vr^2, \vm^2, S^2] \br \mbox{ and } \quad
			(\vr^1, \vm^1, S^1 )(t,\cdot) &= (\vr^2, \vm^2, S^2 )(t, \cdot)  \mbox{ for } 0 \leq t \leq T(\delta).
		\end{align*}
	}
	
	Let
	\[
	(\vr^1, \vm^1, S^1) \prec \hspace{-0.2cm} \prec \dots  \prec \hspace{-0.2cm} \prec (\vr^n, \vm^n, S^n) \prec \hspace{-0.2cm} \prec (\vr^{n+1}, \vm^{n+1}, S^{n + 1})
	\]
	be an ordered sequence. Thanks to compactness of the set $\mathcal{U}[\vr_0, \vm_0, S_0]$, there is a subsequence (not relabeled) such that
	\begin{align}
		\vr^n \to \vr,\ S^n \to S \ &\mbox{weakly-(*) in}\ L^\infty(0,T; L^\gamma(\Omega)),\br
		\vm^n \to \vm \ &\mbox{weakly-(*) in}\ L^\infty(0,T; L^{\frac{2 \gamma}{\gamma + 1}}(\Omega; R^d)),\notag
	\end{align}
	where the limit $(\vr, \vm, S)$ belongs to	$\mathcal{U}[\vr_0, \vm_0, S_0]$. As
	\[
	(\vr^n, \vm^n, S^n)(t, \cdot) = (\vr^k, \vm^k, S^k) (t,\cdot) \ \mbox{in}\ [0, T(\delta)[\vr^k, \vm^k, S^k]]
	\ \mbox{for any fixed}\ k \ \mbox{and all}\ n \geq k,
	\]
	\textcolor{black}{we conclude that
		\[
		(\vr, \vm, S)(t, \cdot) = (\vr^k, \vm^k, S^k) (t,\cdot) \ \mbox{in}\ [0, T(\delta)[\vr^k, \vm^k, S^k]].
		\]
		Consequently,
		\[
		(\vr^k, \vm^k, S^k) \prec \hspace{-0.2cm} \prec(\vr, \vm, S) \ \mbox{for any}\ k =1,2,\dots.
		\]}
	By virtue of Zorn's lemma, the set $\mathcal{U}[\vr_0, \vm_0, S_0]$ contains an element $(\tvr, \tvm, \widetilde{S})$ maximal with respect
	to the order relation $\prec \hspace{-0.2cm} \prec$. 
	We claim
	\[
	T(\delta)[\tvr, \tvm, \widetilde{S}] = T
	\]
	which completes the proof. Assuming the contrary, meaning
	\[
	T(\delta)[\tvr, \tvm, \widetilde{S}] = T_1 < T,
	\]
	we concatenate the solution $(\tvr, \tvm, \widetilde{S})$ with $(\vr^2, \vm^2, S^2)$ at $T = T_1$ satisfying
	\[
	\vr^2_0 = \tvr(T_1, \cdot), \ \vm^2_0 = \tvm(T_1, \cdot),\ S^2_0 \geq \widetilde{S}(T_1, \cdot),
	\]
	where $S^2_0$ is chosen in such a way that
	\textcolor{black}{
		\[
		\intO{ E(\vr_0, \vm_0, S_0) } = \intO{ E(\vr^2_0, \vm^2_0, S^2_0) } \geq \intO{ E(\tvr, \tvm, \widetilde{S})(T_1, \cdot) },
		\]
	}
	cf. Lemma \ref{Ldv1}. Obviously, the concatenated solution
	\[
	(\vr, \vm, S)(t, \cdot) = 
	\begin{cases}
		(\tvr, \tvm, \widetilde{S}) (t,\cdot) \ &\mbox{ for }\ 0 \leq t \leq T_1, \\
		(\vr^2, \vm^2, S^2)(t - T_1) \ &\mbox{ for }\ t > T_1
	\end{cases}
	\]
	satisfies $(\tvr, \tvm, \widetilde{S}) \prec \hspace{-0.2cm} \prec (\vr, \vm, S)$, and, by virtue of \eqref{vvd1},
	\[
	T(\delta) [\vr, \vm, S] > T_1 = T(\delta) [\tvr, \tvm, \widetilde{S}]
	\]
	in contrast with maximality of $(\tvr, \tvm, \widetilde{S})$.
	
\end{proof}	

\section{Energy preserving measure-valued solutions}
\label{B}	

Motivated by Fjordholm, Mishra, and Tadmor \cite{FjMiTa1}, we introduce a class of energy preserving DMV solutions based on a single parametrized (Young) measure $\{ \mathcal{V}_{t,x} \}$ sitting on the space of dummy variables
\[
\mathcal{V}_{t,x} \in \mathfrak{P} \Big\{ (\tvr, \tvm, \tvE) \mid
\ (\tvr, \tvm, \tvE) \in R^{d+2} \Big\}
\]
for a.a. $(t,x) \in (0,T) \times \Omega$. We tacitly assume that the
measures $\mathcal{V}_{t,x}$ are sufficiently localized in the phase space
to accommodate all non-linear fluxes appearing in the Euler system. Thus we allow for a larger framework than \cite{FjMiTa1}, where the Young measure is generated by bounded sequences of approximate solutions.
In view of the recent results of Buckmaster et al.
\cite{BuCLGS}, Cao-Labora et al. \cite{CLGSShiSta},
Merle et al. \cite{MeRaRoSz},
\cite{MeRaRoSzbis}, uniformly bounded consistent approximations may not exist at least for certain initial data. This follows from the following
arguments:
\begin{itemize}
	\item By the aforementioned results, the \emph{isentropic} Euler system admits smooth solutions that blow up in the $L^\infty$-norm in a finite time.
	\item Smooth solutions of the isentropic Euler system are smooth solutions
	of the full Euler system \eqref{E1}--\eqref{E3} with constant entropy.
	\item By the weak--strong uniqueness principle, any DMV solution of the
	Euler system coincides with the smooth solution as long as the latter exists.
	
\end{itemize}	

As a consequence of the above arguments, the existence of global-in-time DMV solution with uniformly bounded support of Young measure is \emph{incompatible} with
the existence of smooth solutions exhibiting a finite-time blow-up.

\begin{Definition} [{\bf Energy preserving DMV solution}] \label{DB1}
	\emph{Energy preserving DMV solution} of the Euler system
	\eqref{E1}--\eqref{E3}, \eqref{E4}, with the initial data $(\vr_0, \vm_0, E_0)$ in $(0,T) \times \Omega$
	is a parametrized measure
	\[
	\mathcal{V}_{t,x} \in \mathfrak{P} \Big\{
	(\tvr, \tvm, \tvE) \mid \ (\tvr, \tvm, \tvE) \in R^{d+2} \Big\},\
	\mathcal{V} \in L^\infty_{\rm weak-(*)} ((0,T) \times \Omega; \mathfrak{P}(R^{d+2}) )
	\]	
	satisfying:
	
	\begin{itemize}	
		
		\item
		The equation of continuity
		\begin{equation} \label{Bd2}
			\int_0^T \intO{ \Big[ \big< \mathcal{V}_{t,x}; \tvr \big> \partial_t \varphi(t,x) + \big< \mathcal{V}_{t,x}; \tvm \big> \cdot \Grad \varphi(t,x) \Big]} \dt = - \intO{ \vr_0(x) \varphi(0, x) } 	
		\end{equation}
		holds for any $\varphi \in C^1([0,T) \times \Ov{\Omega})$.
		\item
		The momentum equation
		\begin{align}
			\int_0^T &\intO{ \bigg[ \big< \mathcal{V}_{t,x}; \tvm \big> \cdot \partial_t \bfphi(t,x)  + \bigg< \mathcal{V}_{t,x};\mathds{1}_{\tvr > 0} \bigg(
				\frac{ \tvm \otimes \tvm}{\tvr} + p(\tvr, \widetilde{E}) \mathbb{I} \bigg)  \bigg> : \Grad \bfphi(t,x) 	 \bigg] } \dt \br
			&=
			\label{Bd3}	- \intO{ \vm_0(x) \cdot \bfphi(0, x) }
		\end{align}
		holds for any $\bfphi \in C^1([0,T) \times \Ov{\Omega}; R^d)$, $\bfphi \cdot \bm{n}|_{\partial \Omega} = 0$.
		\item
		The energy equality
		\begin{align}
			\int_0^T &\intO{ \bigg[ \Big< \mathcal{V}_{t,x}; \tvE\Big>  \partial_t \varphi(t,x)  + \bigg< \mathcal{V}_{t,x}; \mathds{1}_{\tvr > 0} \Big( \tvE + p(\tvr, \tvE) \Big) \frac{\tvm}{\tvr} \bigg> \cdot \Grad \varphi \bigg] } \dt \br &= - \intO{ E_0 \varphi (0, \cdot) }		
			\label{Bd3a}
		\end{align}
		holds for any $\varphi \in C^1([0,T) \times \Ov{\Omega})$.
		\item
		The entropy inequality
		\begin{align}
			\int_0^T &\intO{ \bigg[ \Big< \mathcal{V}_{t,x}; S(\tvr, \tvm, \tvE) \Big> \partial_t \varphi(t,x) + \bigg< \mathcal{V}_{t,x};\mathds{1}_{\tvr > 0} S(\tvr, \tvm, \tvE) \frac{\tvm}{\tvr} \bigg>\cdot \Grad \varphi(t,x) \bigg] } \dt
			\br &\leq - \intO{ S(\vr_0, \vm_0, E_0) \varphi (0, x) }
			\label{Bd4}		
		\end{align}
		holds for any $\varphi \in C^1([0,T) \times \Ov{\Omega})$, $\varphi \geq 0$.
		\item The compatibility condition
		\begin{align}
			\mathcal{V}_{t,x} \bigg\{ \tvr \geq 0  \lor \tvE - \frac{1}{2}\frac{|\tvm|^2}{\tvr} \geq 0 \lor S(\tvr, \tvm, \tvE) \geq \Ov{s} \tvr \bigg\} = 1	
			\label{Bd5}
		\end{align}	
		holds for a.a. $(t,x) \in (0,T) \times \Omega$.
	\end{itemize}

\end{Definition}

In the above definition, we tacitly assume that all integrals are well-defined, meaning the integrand are at least integrable in $(0,T) \times \Omega$. The kinetic energy $\frac{1}{2} \frac{|\vm|^2}{\vr}$ is understood as a convex l.s.c function $(\vr, \vm) \in R^{d+1}$ specified
by \eqref{RR1}. The entropy defined as
\[
S(\vr, \vm, E) =
\begin{cases}
	\vr \log \Bigg( (\gamma - 1)
	\frac{ E - \frac{1}{2} \frac{|\vm|^2 }{\vr} }{\vr^\gamma}  \Bigg) \ &\mbox{if}\ \vr > 0,\ E > \frac{1}{2} \frac{|\vm|^2 }{\vr},\\
	0 \ &\mbox{if}\ \vr = 0, \vm = 0,\ E \geq 0 , \\
	- \infty \ &\mbox{otherwise}
\end{cases}
\]
is a concave u.s.c function of the variables $(\vr, \vm, E)$.

Similarly to the preceding part, we define the total entropy as
\[
\intO{ \widetilde{S}(t, \cdot) } = \lim_{\delta \to 0} \int_{t- \delta}^t \intO{ \Big< \mathcal{V}_{t,x};S(\tvr, \tvm, \tvE)  \Big> } ,\ \intO{ \widetilde{S}(0, \cdot) } = \intO{ S(\vr_0, \vm_0, E_0) }.
\]
In view of \eqref{Bd4}, we observe that the total entropy
\[
t \mapsto \intO{ \widetilde{S}(t, \cdot) }
\]
is a non-decreasing c\` agl\` ad function of $t \in [0,T]$. Moreover,
as $S = S(\vr, \vm, E)$ is a concave function of its arguments, we get
\[
\limsup_{t \to 0+} \intO{ \widetilde{S}(t, \cdot) } \leq
{\rm ess} \limsup_{t \to 0+} \intO{ S(\vr, \vm, E)(t, \cdot) } \leq
\intO{ S(\vr_0, \vm_0, E_0)};
\]
whence
\begin{equation} \label{Bd6}
	\lim_{t \to 0+} \intO{ \widetilde{S}(t, \cdot) } =
	\intO{ S(\vr_0, \vm_0, E_0) }.
\end{equation}

Introducing the expected values
\[
\vr(t,x) =\big< \mathcal{V}_{t,x}, \tvr \big>, \ \vm(t,x) = \big< \mathcal{V}_{t,x}, \tvm \big>,\ E(t,x) = \Big< \mathcal{V}_{t,x}, \tvE \Big>, 
\]
we may formulate Dafermos' admissibility criterion in terms of the order relation
\begin{align}
	(\vr_1, \vm_1, E_1) &\prec_{\rm D} (\vr_2, \vm_2,  E_2) \br &\Leftrightarrow
	\br \mbox{there exists} \ \tau \geq 0 \ \mbox{such that}\
	(\vr_1, \vm_1, E_1) (t, \cdot) &= (\vr_2, \vm_2,  E_2) (t, \cdot)
	\ \mbox{for all}\ 0 \leq t \leq \tau, \br
	\frac{\D^+}{\dt} \intO{ \widetilde{S}_2 (\tau, \cdot) } &>
	\frac{\D^+}{\dt}  \intO{ \widetilde{S}_1 (\tau, \cdot) }.
	\notag
\end{align}

\begin{Definition}[{\bf Maximal energy preserving DMV solution}] \label{DB2}
	We say that an energy preserving DMV solution is maximal if it is maximal with respect to the relation $\prec_{\rm D}$ in the class of all energy preserving DMV solutions.

\end{Definition}

\begin{mdframed}[style=MyFrame]
	
	\begin{Theorem}[{\bf Regularity of energy preserving maximal DMV solutions}] \label{TB1}~\newline
		Suppose that the Euler system admits an energy preserving DMV solution for any initial data $(\vr_0, \vm_0, E_0) \in L^1(\Omega; R^{d+2})$ satisfying
		\[
		\vr_0 \geq 0,\ E_0 - \frac{1}{2} \frac{|\vm_0|^2 }{\vr_0} \geq 0,\
		S(\vr_0, \vm_0, E_0) \geq \Ov{s} \vr_0.
		\]
		
		Let $\{ \mathcal{V}_{t,x} \}$ be a maximal energy preserving DMV solution of the Euler system in $(0,T) \times \Omega$.
		
		Then
		\[
		\mathcal{V}_{t,x} = \delta_{[\vr(t,x), \vm(t,x), E(t,x)]} \ \ \mbox{for a.a.}\ (t,x) \in (0,T) \times \Omega,
		\]
		where $(\vr, \vm, E)$ is a weak solution of the Euler system.
	\end{Theorem}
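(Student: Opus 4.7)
The plan is to recycle the architecture of the proof of Theorem \ref{dT1}, replacing the energy defect by an \emph{entropy defect} built on Jensen's inequality. Define
\begin{equation*}
D_S(t) = \intO{ S(\vr, \vm, E)(t, \cdot) } - \intO{ \widetilde{S}(t, \cdot) }, \quad \widetilde{S}(t, \cdot) = \YM{ S(\tvr, \tvm, \tvE) },
\end{equation*}
where $(\vr, \vm, E)$ are the expected values of $\mathcal{V}_{t,x}$. Since $S$ is concave on its effective domain -- strictly concave in its interior by the hypothesis of thermodynamic stability -- Jensen's inequality yields $D_S \geq 0$, with strict inequality on a set of positive measure precisely when $\mathcal{V}_{t,x}$ fails to concentrate on a Dirac mass. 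Moreover, \eqref{Bd6} gives $D_S(0+) = 0$, and a semicontinuity argument using weak continuity of $(\vr, \vm, E)$ together with upper semicontinuity of $S$ under weak convergence supplies the structural regularity of $D_S$ that played the role of \eqref{vvd1} in the earlier proof.

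The second ingredient is the energy preserving analogue of Lemma \ref{Ldv1}. Given two energy preserving DMV solutions $\mathcal{V}^1, \mathcal{V}^2$ with matching traces
\begin{equation*}
\vr_1(T_1-, \cdot) = \vr_0^2, \ \vm_1(T_1-, \cdot) = \vm_0^2, \ E_1(T_1-, \cdot) = E_0^2, \ S(\vr_0^2, \vm_0^2, E_0^2) \geq \widetilde{S}_1(T_1, \cdot),
\end{equation*}
the concatenation at $t = T_1$ yields again an energy preserving DMV solution. The three matching traces of $(\vr, \vm, E)$ kill the Dirac jump terms at $T_1$ in the continuity, momentum, and energy equations \eqref{Bd2}--\eqref{Bd3a}, while the entropy inequality \eqref{Bd4} is only reinforced by the non-negative Dirac increment $\delta_{T_1} \intO{ [ S(\vr_0^2, \vm_0^2, E_0^2) - \widetilde{S}_1(T_1, \cdot) ] \varphi }$ for $\varphi \geq 0$. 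As in Lemma \ref{LLdv1}, $T_1$ has to be picked outside the at most countable set of discontinuities of the total entropy trace.

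With these pieces in place, the maximality argument proceeds as in Section \ref{proof}. Assuming $\mathcal{V}_{t,x}$ is maximal but not a Dirac mass on a set of positive measure, pick $\tau$ at which $t \mapsto \intO{ \widetilde{S}(t, \cdot) }$ is differentiable with finite right derivative and $D_S(\tau) > 0$. By the standing existence hypothesis of Theorem \ref{TB1}, launch a new energy preserving DMV solution from the data $(\vr(\tau, \cdot), \vm(\tau, \cdot), E(\tau, \cdot))$, whose starting entropy $S(\vr(\tau,\cdot), \vm(\tau,\cdot), E(\tau,\cdot))$ strictly exceeds $\widetilde{S}(\tau, \cdot)$ in $L^1(\Omega)$. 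Concatenating via the preceding step produces a competitor whose total entropy jumps upward at $\tau$, so its right derivative equals $+\infty$ at $\tau$, strictly dominating the original solution and contradicting maximality. Hence $D_S \equiv 0$, and strict concavity of $S$ forces $\mathcal{V}_{t,x} = \delta_{[\vr(t,x), \vm(t,x), E(t,x)]}$ almost everywhere; the integral identities \eqref{Bd2}--\eqref{Bd4} then collapse to a weak formulation of the Euler system. The main obstacle will be verifying that the expected-value data at $\tau$ are admissible initial data in the sense of the hypothesis of Theorem \ref{TB1} -- the positivity $\vr \geq 0$, the kinetic bound $E - \tfrac{1}{2}|\vm|^2/\vr \geq 0$, and the entropy bound $S(\vr, \vm, E) \geq \Ov{s}\vr$ -- which requires combining the compatibility condition \eqref{Bd5} with convexity of the kinetic energy and concavity of $S$ under expectation, together with a careful choice of $\tau$ outside a negligible exceptional set where the weak traces fail to agree with the essential pointwise values.
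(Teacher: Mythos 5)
Your proposal is correct and follows essentially the same route as the paper: the entropy defect $D_{\rm Ent}(t)=\intO{S(\vr,\vm,E)(t,\cdot)}-\intO{\widetilde S(t,\cdot)}\geq 0$ via Jensen's inequality and concavity of $S$, a concatenation at a time $T_1$ where the defect is positive and the total entropy has a finite derivative, and the resulting infinite right derivative of the total entropy contradicting $\prec_D$-maximality. The only difference is that you explicitly flag the admissibility of the expected-value data $(\vr(\tau,\cdot),\vm(\tau,\cdot),E(\tau,\cdot))$ as initial data, a point the paper leaves implicit in the standing existence hypothesis.
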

	
\end{mdframed}

\begin{proof}
	
	The proof is similar to that of Theorem \ref{dT1}. The oscillations
	of the solution are characterized by the entropy defect
	\begin{align}
		D_{\rm Ent}(t) &= \intO{ S(\vr, \vm, E)(t, \cdot) } -
		\intO{ \Big< \mathcal{V}_{t,x};S(\tvr, \tvm, \tvE) \Big> } \br
		&= \intO{ S(\vr, \vm, E)(t, \cdot) } -
		\intO{ \widetilde{S}(t, \cdot) } \geq 0,
		\notag
	\end{align}
	which is a well-defined finite quantity for a.a. $t \in (0,T)$.
	
	If the parametrized measure $\{ \mathcal{V}_{t,x} \}$ is not a Dirac mass a.a. in $(0,T) \times \Omega$, we can find
	\begin{align}
		0 < T_1 < T,\  \intO{ S(\vr, \vm, E)(T_1, \cdot) } &-
		\intO{ \widetilde{S}(T_1, \cdot) } > 0, \br
		\frac{\D^+ }{\dt} \intO{ \widetilde{S} (T_1, \cdot) } &=
		\frac{\D }{\dt} \intO{ \widetilde{S} (T_1, \cdot) } < \infty.
		\notag
	\end{align}
	Now, we define a new solution by concatenation $\mathcal{V}_{t,x} \cup_{T_1}
	\mathcal{V}^1_{t,x}$
	of $\{ \mathcal{V}_{t,x} \}$ at the time $T_1$ with a solution $\{ \mathcal{V}^1_{t,x} \}$ starting form the initial data
	\[
	\vr^1_0 = \vr(T_1, \cdot),\ \vm^1_0 = \vm (T_1, \cdot),\
	E^1_0 = E(T_1, \cdot),
	\] 	
	where
	\[
	\intO{ S(\vr^1_0, \vm^1_0, E^1_0) } > \intO{\widetilde{S}(T_1)}.
	\]
	Denoting the concatenated solution $\{ \mathcal{V}^2_{t,x} \} =
	\{ \mathcal{V}_{t,x} \} \cup_{T_1} \{ \mathcal{V}^1_{t,x} \}$, we have
	\[
	\frac{\D^+}{\dt} \intO{ \widetilde{S}^2(T_1, \cdot)} = \infty;
	\]
	whence the original solution is not $\prec_{\rm D}$ maximal.
	
\end{proof}

We conclude this part by showing an analogy of the convergence dichotomy stated in Proposition \ref{Pca1}. To this end, we first introduce a new order relation proposed by DiPerna \cite{DiP2} in the class of energy preserving DMV solutions: 
\begin{align}
	\{ \mathcal{V}^1_{t,x} \} &\prec_{\rm DP} \{ \mathcal{V}^2_{t,x} \} \br
	&\Leftrightarrow_{\rm def}   \br 
	\intO{ \Big< \mathcal{V}^1_{t,x}; S(\tvr, \tvm, \tvE) \Big> } &\leq  	\intO{ \Big< \mathcal{V}^2_{t,x}; S(\tvr, \tvm, \tvE) \Big> } \ \mbox{for a.a.}\ t \in (0,T).
	\notag
\end{align}	
We say that a solution $\{ \widetilde{\mathcal{V}}_{t,x} \}$ is 
$\prec_{\rm DP}$ maximal if for any other solution $\{ \mathcal{V}_{t,x} \}$ satisfying 
\[
\{ \widetilde{\mathcal{V}}_{t,x} \} \prec_{\rm DP} \{ \mathcal{V}_{t,x} \}
\]
there holds	
\[
\intO{ \Big< \mathcal{V}_{t,x}; S(\tvr, \tvm, \tvE) \Big> } =  	\intO{ \Big< \widetilde{\mathcal{V}}_{t,x}; S(\tvr, \tvm, \tvE) \Big> } \ \mbox{for a.a.}\ t \in (0,T).
\]

Let
\[
\vr =\big< \mathcal{V}_{t,x}, \tvr \big>, \ \vm= \big< \mathcal{V}_{t,x}, \tvm \big>,\ E= \Big< \mathcal{V}_{t,x}, \tvE \Big>
\]	
be the expected values of a $\prec_{\rm DP}$ maximal energy preserving 
DMV solution of the Euler system. By Jensen's inequality, we have 
\[
\intO{ \Big< \mathcal{V}_{t,x};  S(\tvr, \tvm, \tvE)(t,\cdot) \Big> } \leq 
\intO{ S(\vr, \vm, E) (t,\cdot) }.
\]
As $S$ is (strictly) concave, we immediately observe: 

The expected values $(\vr, \vm, E)$ represent a weak solution of the Euler system if and only if 
\[
\mathcal{V}_{t,x} = \delta_{[\vr(t,x), \vm(t,x), E(t,x)] } 
\ \mbox{for a.a.}\ (t,x) \in (0,T) \times \Omega.
\]

\section{Numerical simulations}
\label{NUM}
To illustrate our theoretical results, we present numerical simulations obtained by a structure-preserving numerical method. It yields stable and consistent approximations that converge, up to a subsequence as the case may be, to a DMV solution $(\vr, \vm, S)$ of the Euler system, cf.~\eqref{gen1}.

To start, a computational domain $\Omega \subset R^d$ will be discretized by a mesh $\mathcal{T}_h$, where $h \in (0,1)$ denotes a mesh parameter. Here, we consider a regular rectangular grid $\Omega = \bigcup_{K \in \mathcal{T}_h }  K $, with mesh cells $ |K | = \mathcal{O}(h^d)$. Cell interfaces are denoted by $\sigma $, $|\sigma| = \mathcal{O}(h^{d-1})$.  Let  $r_h$ be a piecewise constant function on $\mathcal{T}_h$.  Then,   $\jump{ r_h }$ and $\avs{r_h}$ stand for standard jump and average discrete operators on a cell interface $\sigma $, respectively.

Specifically, we will work with the viscous finite volume method (VFV), see, e.g., \cite{FeLMMiSh}, for further details on its description and analysis,
\begin{equation}
	\begin{aligned} 
		\label{FV}
		D_t \vr_h|_K &=-\frac{1}{|K|}\sum_{\sigma \in \partial K} \int_\sigma F_{h, \vr}^{\mathrm{up}}[\vr_h, \vu_h] \mathrm{d} S_x, \\ 
		D_t \vm_h|_K &=-\frac{1}{|K|}\sum_{\sigma \in \partial K} \int_\sigma \Big(  \boldsymbol{F}_{h,\vm}^{\mathrm{up}}(\vm_h, \vu_h) + \avs{p_h} \vn -h^{\alpha-1}  \jump{ \vu_h } \Big) \mathrm{d} S_x,\\
		D_t E_h|_K &= -\frac{1}{|K|}\sum_{\sigma \in \partial K} \int_\sigma \Big( F_{h, E}^{\mathrm{up}}[E_h, \vu_h] 
		+\avs{p_h}\vu_h\cdot \vn
		+ p_h  \avs{\vu_h}  \cdot \vn \\
		& -h^{\alpha-1}   \jump{ \vu_h } \cdot \avs{\vu_h} \Big)  \mathrm{d} {S}_x.
	\end{aligned}
\end{equation}
On any cell interface $\sigma \in \partial K$ we apply the \emph{viscous upwind numerical flux} 
\begin{equation}
	\label{flux}
	\begin{aligned}
		&\Big( F_{h, \vr}^{\mathrm{up}}, \boldsymbol{F}_{h,\boldsymbol{m}}^{\mathrm{up}}, F_{h, E}^{\mathrm{up}}\Big) =U p
		\big[\vU_h, \vu_h\big]-h^{\varepsilon}\jump{ \vU_h } ,  \\
		&U p[\vU_h, \vu_h]  =\avs{\vU_h}\avs{\vu_h} \cdot \vn-\frac{1}{2} \big|\avs{\vu_h} \cdot \vn \big| \jump{ \vU_h }, 
	\end{aligned}
\end{equation} 
where $\vU_h = (\vr_h, \vm_h, E_h)$ and $\vn$  denotes the outer normal to $\sigma \in \partial K$.  In numerical simulations presented below we set numerical diffusion parameters $\alpha=1.0$ and  $\varepsilon=2.0$.\footnote{For the fifth and seventh-order VFV methods used below we set $\alpha=5.0$ and $\varepsilon=6.0$.}

% For time approximation, the first-order backward Euler method is applied to approximate \eqref{FV} on $[0,T]$.  
Numerical solutions $\{ (\vr_h, \vm_h, E_h) \}_{h \searrow 0}$ are obtained from \eqref{FV}, \eqref{flux} by piecewise constant interpolation in space
\footnote{Time approximation is realized by a suitable ODE solver. For consistency we apply a third-order strong stability preserving Runge Kutta method for  both the first-order VFV method as well as for higher-order VFV methods.}. 

As proved in \cite[Chapter~10]{FeLMMiSh} \eqref{FV}, \eqref{flux} yield a consistent approximation, in the sense of Definition~\ref{Dca1}, which is convergent to a DMV solution. 
More precisely, in addition to the weak convergence \eqref{gen1}, the strong convergence of the so-called Ces\`aro averages holds \cite[Theorem~10.5]{FeLMMiSh}
\begin{equation*}
	\begin{split}
		\frac{1}{N} \sum_{n=1}^N (\vr_{h_n}, \vm_{h_n}, S_{h_n} ) &\to (\vr, \vm, S) \ \ \mbox{ as }\ N \to \infty \ \ \mbox{ in } \ L^q((0,T) \times \Omega;  R^{d+2})
	\end{split}
\end{equation*}
for any $ 1 \leq q < \infty$. Further,  
\begin{equation*}
	d_{W_s} \Bigg[ \frac{1}{N} \sum_{n=1}^N \delta_{[\vr_{h_n},
		\vm_{h_n}, S_{h_n} ]} ; \mathcal{V} \Bigg] \to 0
	\ \mbox{as}\ N \to \infty \ \ \mbox{in}\ L^q((0,T) \times \Omega),
\end{equation*}
where $d_{W_s}$ denotes the Wasserstein distance, $1 \leq s, q < 2$.   
Thus, the corresponding Young measure $\mathcal{V}_{t,x}$ is a computable DMV solution to the Euler system \eqref{E1}--\eqref{E3}. 

In what follows, we demonstrate that numerical solutions obtained by the VFV method are oscillatory in the sense of Definition~\ref{Dac2}. 
% and the computable DMV solution $\mathcal{V}$ is not maximal in the sense of Definition~\ref{DDd1}.
Specifically, we consider the following initial data for the so-called Kelvin--Helmholtz problem on $\Omega = [0,1]^2$:
\begin{equation}\label{euler:KH_ini}
	(\rho,u,v,p)(x,y,0) = 
	\begin{cases}
		(2,-0.5,0,2.5),\ &\text{if} \  I_1\leq y \leq I_2, \\
		(1,\ \ 0.5,0,2.5),\ &\text{otherwise},\\
	\end{cases}
\end{equation}
where the interface profiles $I_j = I_j(x) = J_j + \epsilon Y_j(x), j = 1,2$ are chosen to be small perturbations around the lower interface with $ y= J_1 = 0.25$ and the upper interface with $y = J_2 = 0.75$, respectively. Further,  
\[Y_j(x) = \sum_{k=1}^m a_j^k \cos(b_j^k+2k\pi x), \ j = 1,2, \]
where $a_j^k\in[0,1]$ and $b_j^k, \ j=1,2,k=1,\cdots,m$ are arbitrary but fixed numbers. The coefficients $a^k_j$ have been normalized such that $\sum_{k=1}^m a_j^k=1$ to guarantee that $|I_j(x) - J_j|\leq \epsilon$ for $j = 1,2$. Here,  the parameters are set  $m=10$, $\epsilon=0.01$. The ratio of specific heats is $\gamma=1.4$.

\begin{figure}[!h]
	\centering
	\subfigure[first-order VFV method]{
		\includegraphics[width=0.30\linewidth]{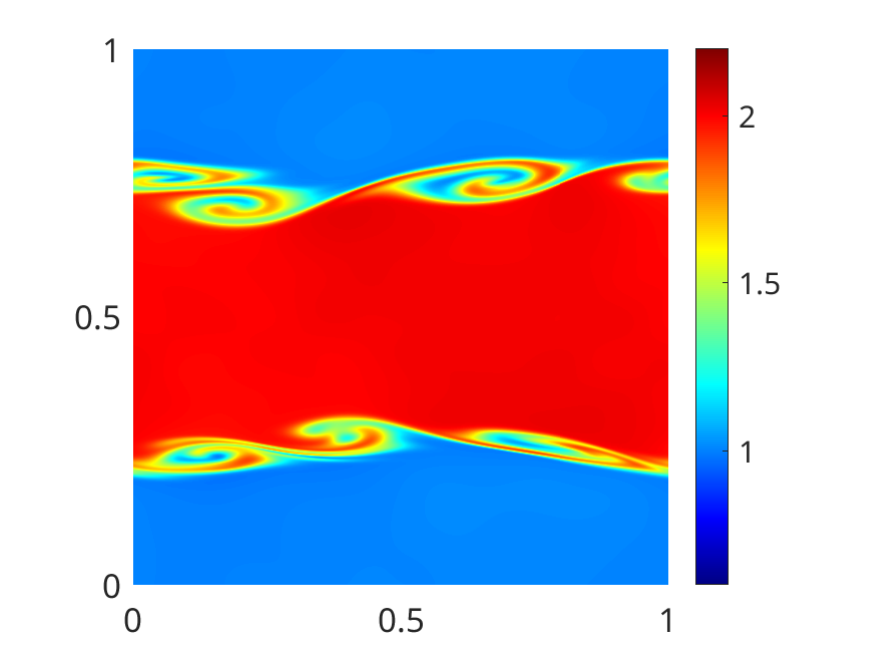}
	}
	\subfigure[second-order VFV method]{
		\includegraphics[width=0.30\linewidth]{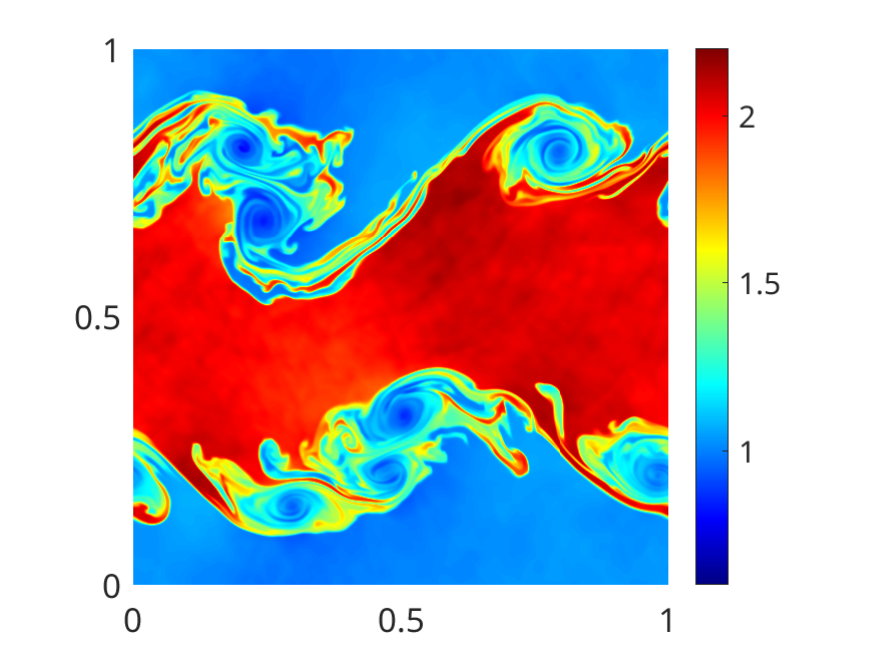}
	}
	\subfigure[third-order VFV method]{
		\includegraphics[width=0.30\linewidth]{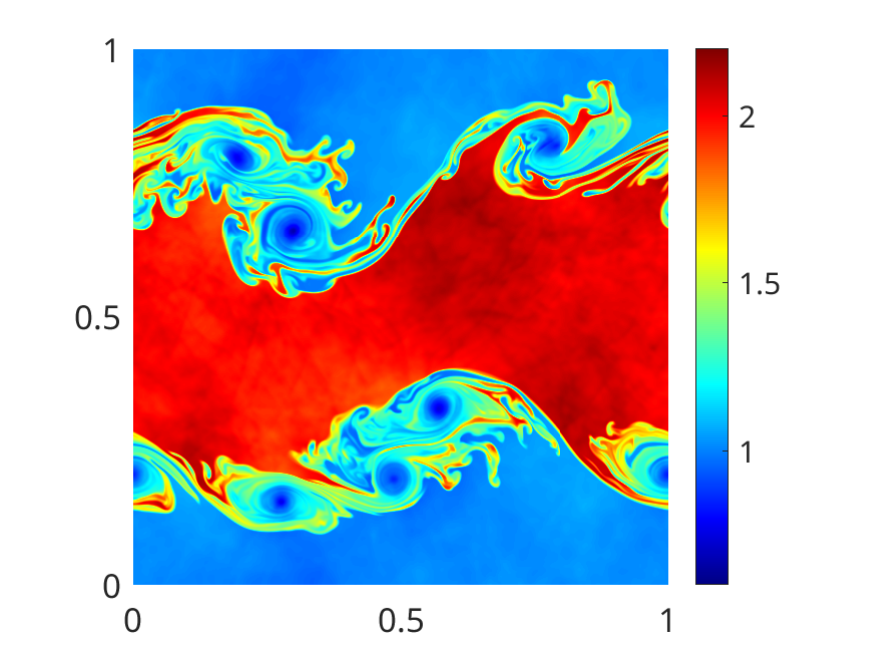}
	}
	\\
	\subfigure[fifth-order VFV method]{
		\includegraphics[width=0.30\linewidth]{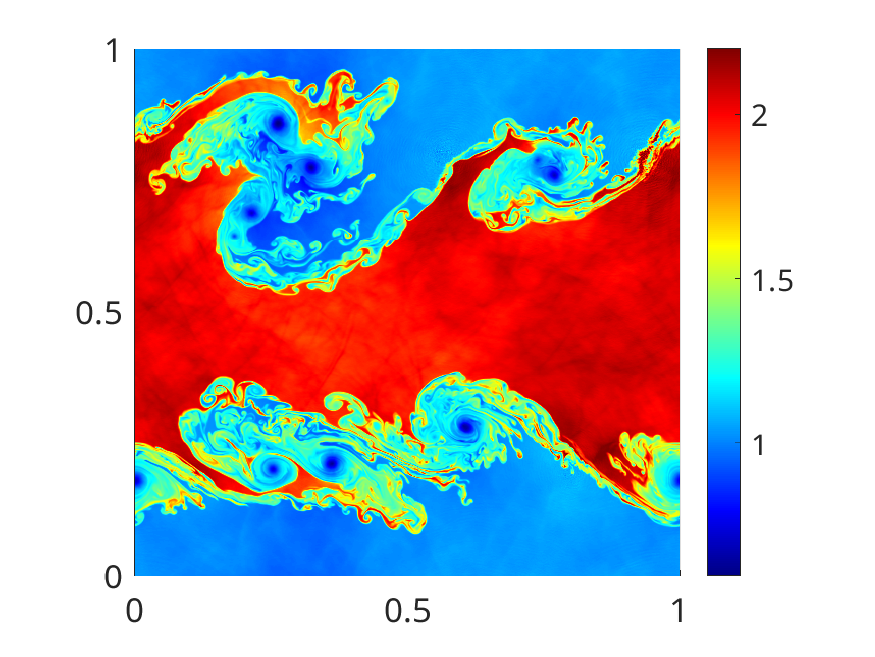}
	}
	\subfigure[seventh-order VFV method]{
		\includegraphics[width=0.30\linewidth]{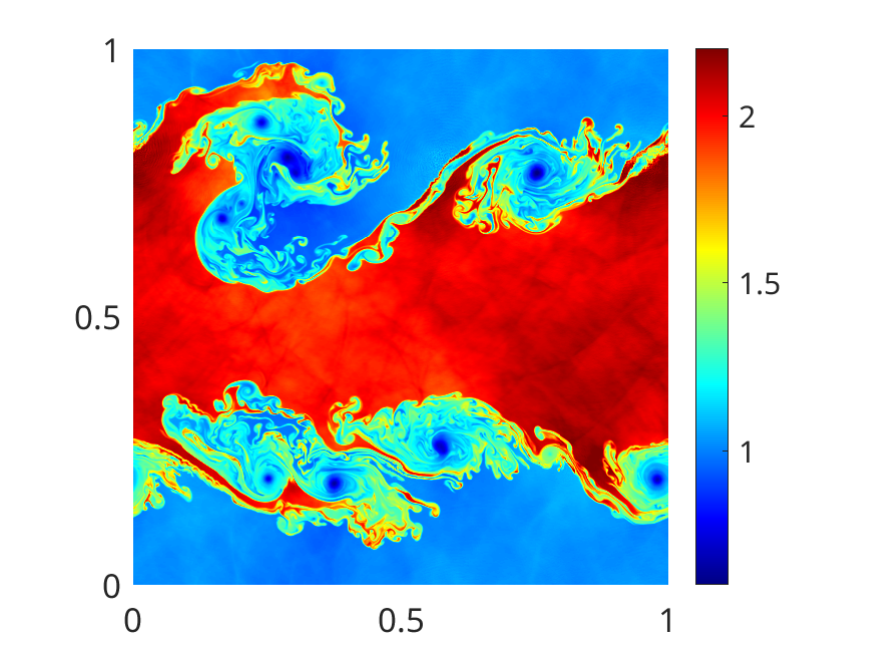}
	}
	\caption{Density computed by  the first and higher-order VFV methods at time $T=2.0$.}
	\label{fig1}
\end{figure}

\begin{figure}[!h]
	\centering
	\subfigure[first-order VFV method]{
		\includegraphics[width=0.30\linewidth]{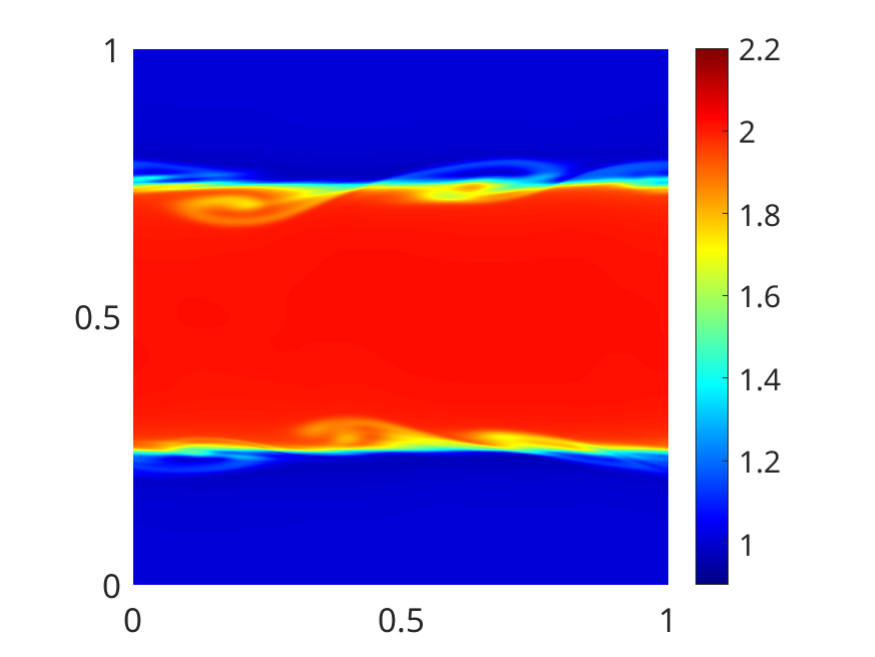}
	}
	\subfigure[second-order VFV method]{
		\includegraphics[width=0.30\linewidth]{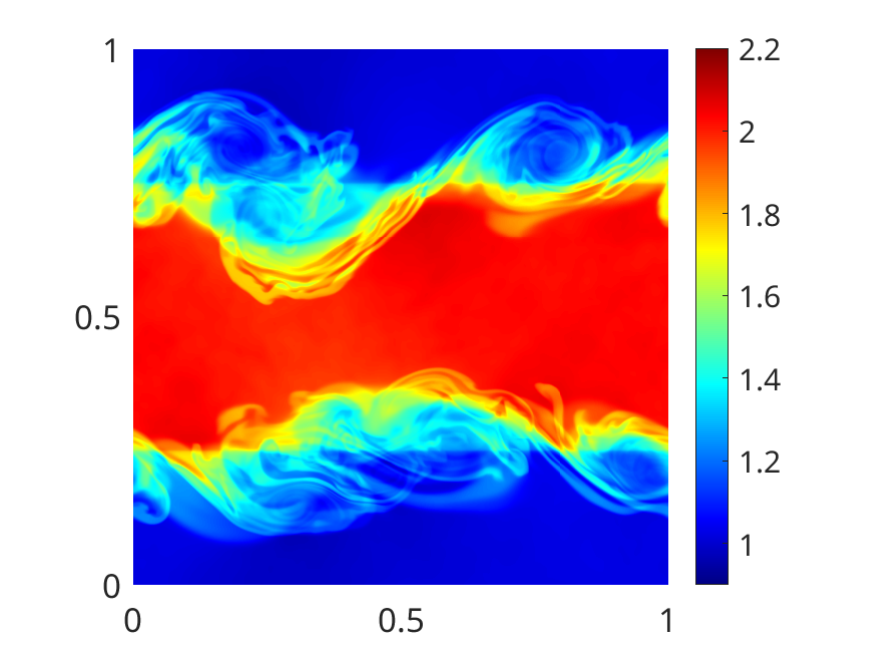}
	}
	\subfigure[third-order VFV method]{
		\includegraphics[width=0.30\linewidth]{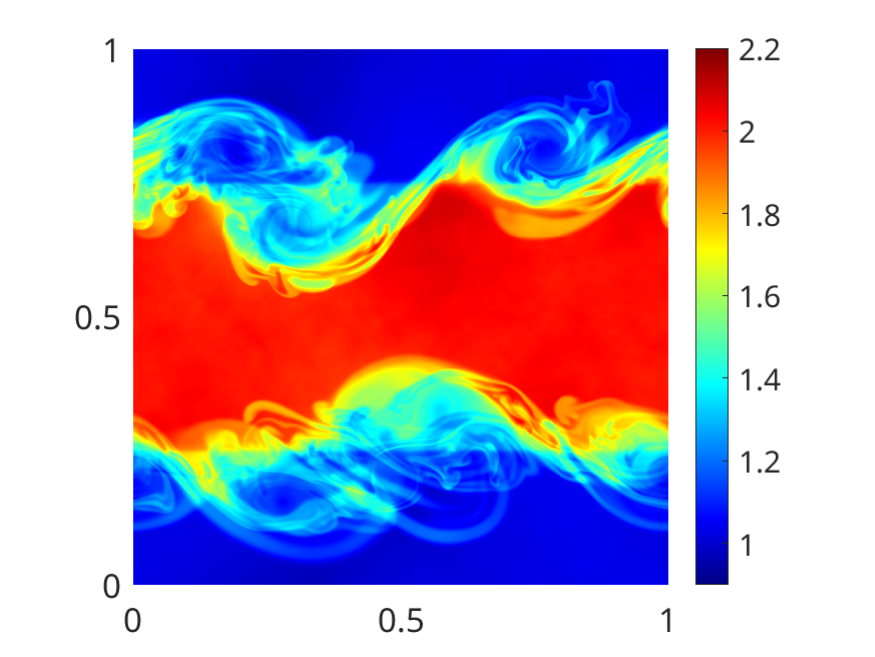}
	}
	\\
	\subfigure[fifth-order VFV method]{
		\includegraphics[width=0.30\linewidth]{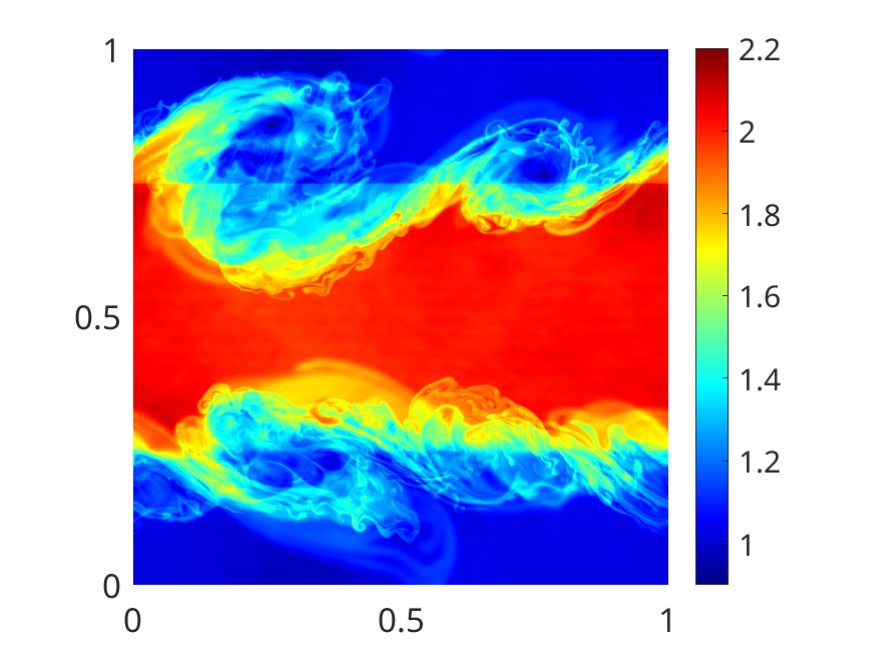}
	}
	\subfigure[seventh-order VFV method]{
		\includegraphics[width=0.30\linewidth]{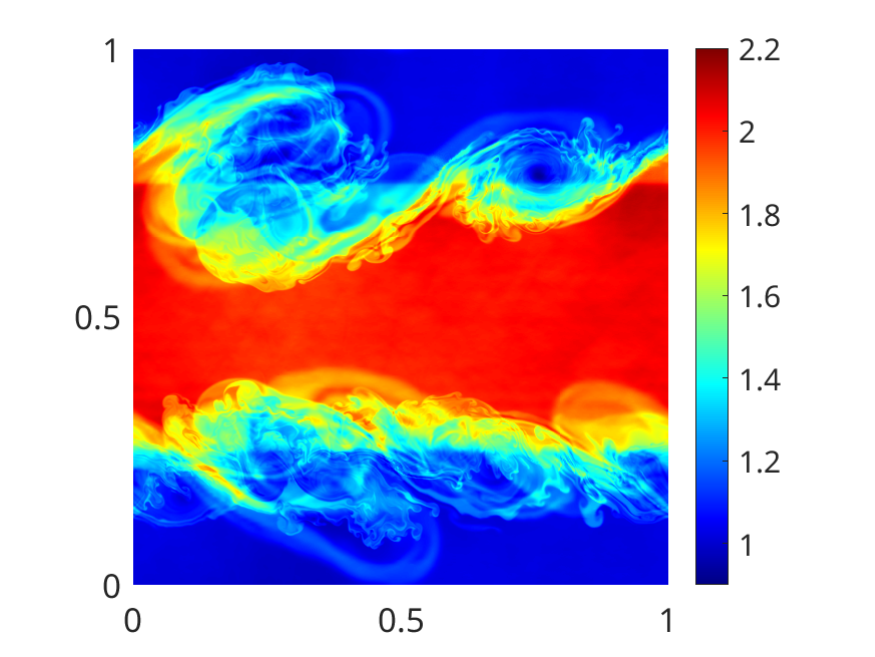}
	}
	\caption{Ces\`aro averages of the density computed by the first and higher-order VFV methods at time $T=2.0$.}
	\label{fig2}
\end{figure}

In this example, initial data lead to a solution of  shear flow-type  with small initial oscillations that are rising up as time progresses. In Figure~\ref{fig1} we present numerical density obtained by the first-order VFV method as well as higher-order VFV methods at time T=2.0. The latter are based on the A-WENO reconstruction and higher-order finite difference flux corrections, see~\cite{Kurganov} for the details of a higher order FV method. 

As a solution remains bounded, the concentration measure $\mathfrak{C} = 0$. We will show below that numerical solutions are oscillatory since 
there is a nonzero oscillation defect, the so-called \emph{Reynolds defect}  
\begin{equation}
	\mathfrak{R} = \bigg \langle \mathcal{V}_{t,x}; \frac{\tvm \otimes \tvm}{\tvr} + p(\tvr,\tilde{S}) \mathbb{I} \bigg \rangle - \frac{\vm \otimes \vm}{\vr} - p(\vr, S) \mathbb{I}
\end{equation}
and nonzero energy defect $D_{\rm E}$, cf.~\eqref{defect}.
%The latter is sometimes called in the literature Reynolds oscillation defect. We use a short notation, \emph{the Reynolds defect}, since the concentration defect %$\mathfrak{C} = 0$. 
Consequently,   $\mathcal{V}_{t,x}  \neq \delta_{[\vr(t,x), \vm(t,x), S(t,x)] }$.

In what follows, we present time evolution of the approximation of the Reynolds defect $\mathfrak{R}$ and the energy
defect $D_{\rm E}$. To this end, we consider a sequence of meshes $\mathcal{T}_{h_n}$ with $h_1=\frac{1}{64},\ h_2=\frac{1}{128},\ h_3=\frac{1}{256},\ h_4=\frac{1}{512},\ h_5=\frac{1}{1024}$ and compute the $L^1$-norm of the Reynolds defect $\mathcal{R}(t)$ as well as the energy defect $D_{\rm E}(t)$ as functions of time.
To shorten the notations we denote the Ces\`aro averages as
\begin{equation}
	\widetilde{\vr}_N= \frac 1 N \sum_{n=1}^{N} \vr_{h_n},\ \widetilde{\vm}_N= \frac 1 N \sum_{n=1}^{N} \vm_{h_n},\ \widetilde{S}_N= \frac 1 N \sum_{n=1}^{N} S_{h_n},\ N=5.
\end{equation}
Figure~\ref{fig2} presents the Ces\`aro averages computed for the first and higher-order VFV methods.  Reconstruction between a coarse and a fine grid is realised by the ninth-order  A-WENO reconstruction, see~\cite{Kurganov}.  We can observe that the Ces\`aro  averages are similar for different VFV methods which is in contrast to Figure~\ref{fig1}. The latter shows that for higher-order methods more oscillatory solutions were obtained.   The following functions of the defects have been computed to quantify the oscillatory behaviour of numerical solutions:
\begin{equation}
	\begin{aligned}
		\mathcal{R}_1(t)&=\norm*{\frac{1}{N} \sum_{n=1}^{N} \bigg[  \frac{\vm_{h_n}(t,\cdot) \otimes \vm_{h_n}(t,\cdot)}{\vr_{h_n}(t,\cdot)}+p(\vr_{h_n}(t,\cdot),S_{h_n}(t\cdot)) \mathbb{I} \bigg] }_{L^1(\Omega)}, \\
		\mathcal{R}_2(t)&=\norm*{\frac{\widetilde{\vm}_N(t,\cdot)\otimes \widetilde{\vm}_N(t,\cdot)}{{\tvr}_N(t,\cdot)}+ p\Big( {\tvr}_N(t,\cdot),\widetilde{S}_N (t,\cdot)\Big) \mathbb{I}}_{L^1(\Omega)}, \\
		\mathcal{R}(t)&=\bigg\lVert \frac{1}{N} \sum_{n=1}^{N} \bigg[  \frac{\vm_{h_n}(t,\cdot) \otimes \vm_{h_n}(t,\cdot)}{\vr_{h_n}(t,\cdot)}+p(\vr_{h_n}(t,\cdot),S_{h_n}(t,\cdot)) \mathbb{I} 
		\bigg] \\
		&\qquad \qquad \qquad \qquad \qquad -  \frac{\widetilde{\vm}_N(t,\cdot)\otimes \widetilde{\vm}_N(t,\cdot)}{{\tvr}_N(t,\cdot)} - p\Big( {\tvr}_N(t,\cdot),\widetilde{S}_N (t,\cdot)\Big) \mathbb{I}  \bigg\lVert_{L^1(\Omega)},\\
		\mathcal{E}_1(t)&= \int_{\Omega} \widetilde{E}_N (t,\cdot) \dx = \int_{\Omega} 
		\frac{1}{N} \sum_{n=1}^{N} \bigg[  \frac{|\vm_{h_n}(t,\cdot)|^2}{2\vr_{h_n}(t,\cdot)}+\vr_{h_n}(t,\cdot)e(\vr_{h_n}(t,\cdot),S_{h_n}(t,\cdot)) \bigg] \dx, \\
		\mathcal{E}_2(t)&= \int_{\Omega} E\Big( \widetilde{\vr}_N(t,\cdot), \widetilde{\vm}_N(t,\cdot), \widetilde{S}_N(t,\cdot) \Big) \\
		&= \int_{\Omega}  \frac{|\widetilde{\vm}_N(t,\cdot)|^2}{2\widetilde{\vr}_N(t,\cdot)}+\widetilde{\vr}_N(t,\cdot) e\Big( \widetilde{\vr}_N(t,\cdot),\widetilde{S}_N(t,\cdot) \Big) \dx,\\
		D_{\rm E}(t) &= \int_{\Omega} \widetilde{E}_N(t,\cdot)-E\Big( \widetilde{\vr}_N(t,\cdot), \widetilde{\vm}_N(t,\cdot), \widetilde{S}_N(t,\cdot) \Big) \dx.
	\end{aligned}
\end{equation}

\begin{figure}
	\centering
	\subfigure[$\mathcal{R}_1(t)$, $\mathcal{R}_2(t)$]{
		\includegraphics[width=0.22\linewidth]{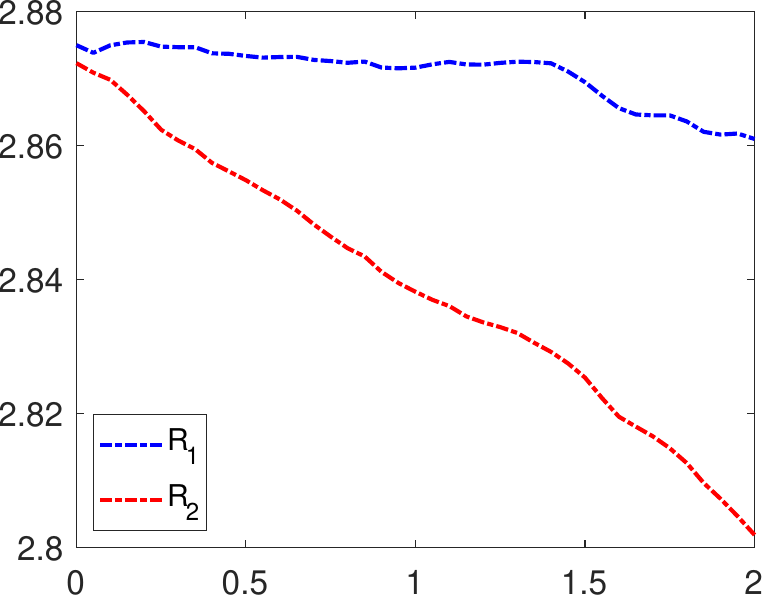}
	}
	\subfigure[$\mathcal{R}(t)$]{
		\includegraphics[width=0.22\linewidth]{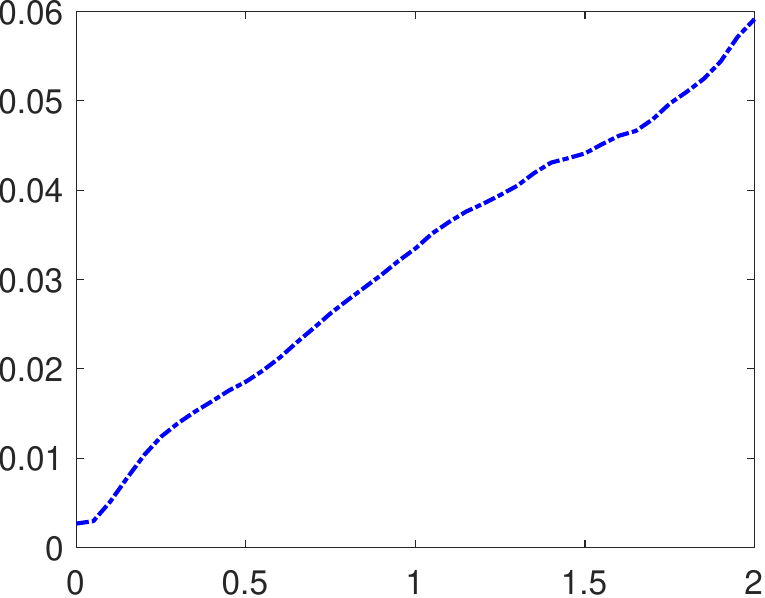}
	}
	\subfigure[$\mathcal{E}_1(t),$ $\mathcal{E}_2(t)$]{
		\includegraphics[width=0.22\linewidth]{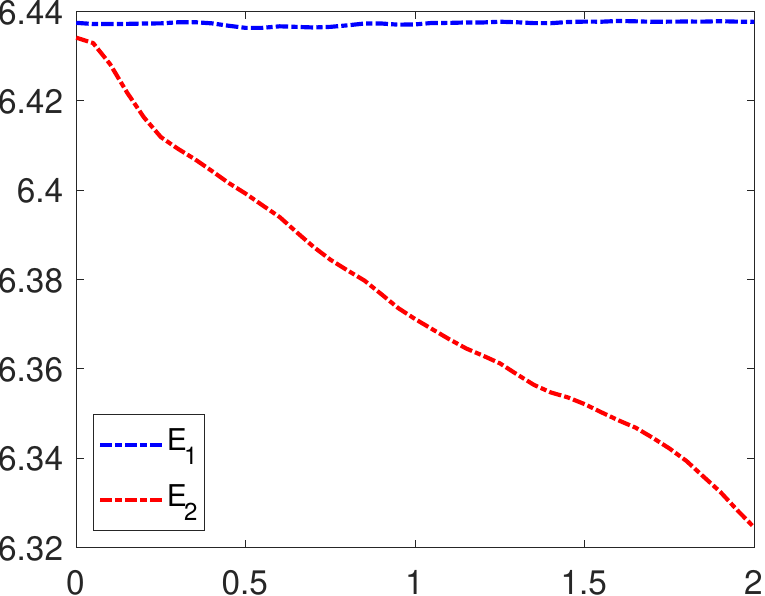}
	}
	\subfigure[$D_E(t)$]{
		\includegraphics[width=0.22\linewidth]{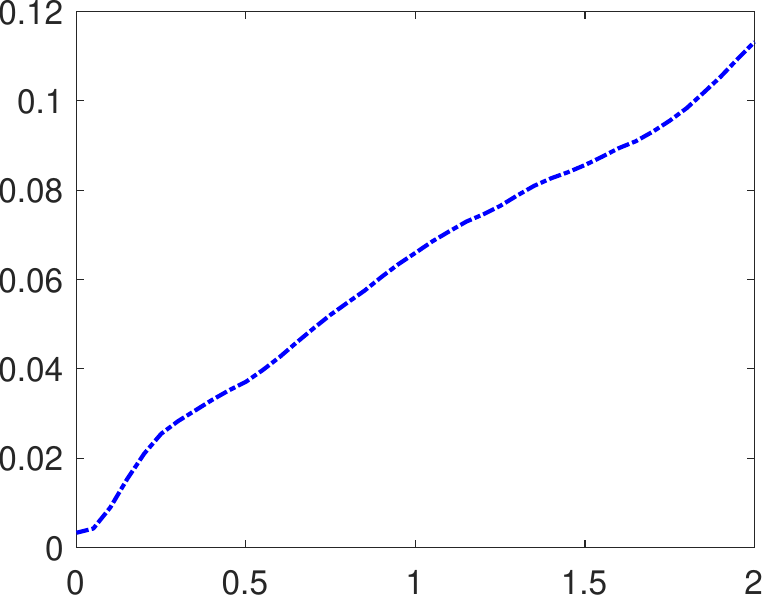}
	}
	\caption{Time evolution of the $L^1$-norm of the Reynolds defect and of the energy defect, $t \in [0,2]$.}	
	\label{fig3a}
\end{figure}

As illustrated in Figure~\ref{fig3a}(c), the Ces\`aro average of the energy is constant in time. However, the energy defect $D_{\rm E}(t) \neq 0$. Indeed, as we observe 
%\ref{fig3a}c)  
the energy of a dissipative solution satisfies $\int_\Omega E(\vr, \vm, S) \dx<  \int_\Omega \langle \mathcal{V}_{t,x}; E(\tvr, \tvm, \tilde{S})\rangle \dx = \int_\Omega E(\vr_0, \vm_0, S_0) \dx$.
Analogously, we also observe that the $L^1$-norm of the Reynolds defect $\mathcal{R} \neq 0$.  Consequently, according to Theorem~\ref{cadT} the computable DMV solution obtained by the VFV method \eqref{FV},\eqref{flux} is not maximal.  

Our extensive numerical experiments confirm that this is the case for many standard numerical methods for the Euler system, such as DG methods, central-upwind finite volume method, low dissipation central upwind method including their higher-order variants. We refer to our forthcoming paper for further details for different numerical methods \cite{CHKLY}. We conclude this section by presenting the results for the solutions obtained by higher-order VFV method that is based on the higher-order A-WENO  reconstruction and higher-order central difference fluxes, cf.~\cite{Kurganov}.  Table~\ref{tab1} documents that for the first and the higher-order VFV methods numerical solutions are oscillatory, i.e. $D_{\rm E} \neq 0 $. We also  present the total entropy 
\[
\mathcal{S}(t) \equiv   \int_\Omega\widetilde S_N( t,\cdot) \dx, \quad t \in (0,T]
\]
and the entropy defect 
\[
D_{\rm Ent}(t)=  \int_{\Omega}  S\Big( \widetilde{\vr}_N (t,\cdot), \widetilde{\vm}_N (t,\cdot), \widetilde{E}_N (t,\cdot)\Big) - \widetilde{S}_N (t,\cdot) \dx
\]
of the energy-preserving  DMV solutions as described in  Section~\ref{B}. For the latter, the energy equality holds, cf.~\eqref{Bd3a}, and the limiting process to obtain an energy-preserving DMV solution is realized in the conservative variables $(\tvr_N, \tvm_N, \widetilde{E}_N)$ while keeping the entropy as a nonlinear function of the conservative variables. In Figure~\ref{fig4} time evolution of the total entropy $\mathcal{S}(t)$ and  the entropy defect $D_{\rm Ent}(t)$ are presented. Both defects, the energy defect computed for the DMV solution, cf.~Definition~\ref{Dd2},  and the  entropy  defect computed for the energy-preserving DMV solution, cf.~Definition~\ref{B}, are nonzero  increasing functions of time.  Consequently, the numerical solutions do not approximate a maximal computable solution with respect to {$\prec_{\rm D}$}, see Theorem~\ref{cadT}.

\begin{figure}[!h]
	\centering
		\includegraphics[width=0.22\linewidth]{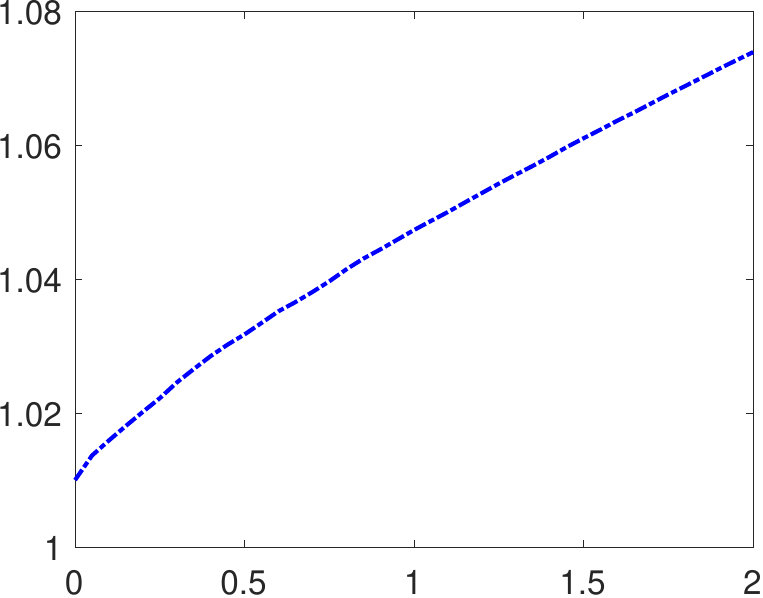}
		\hspace{2cm}
		\includegraphics[width=0.22\linewidth]{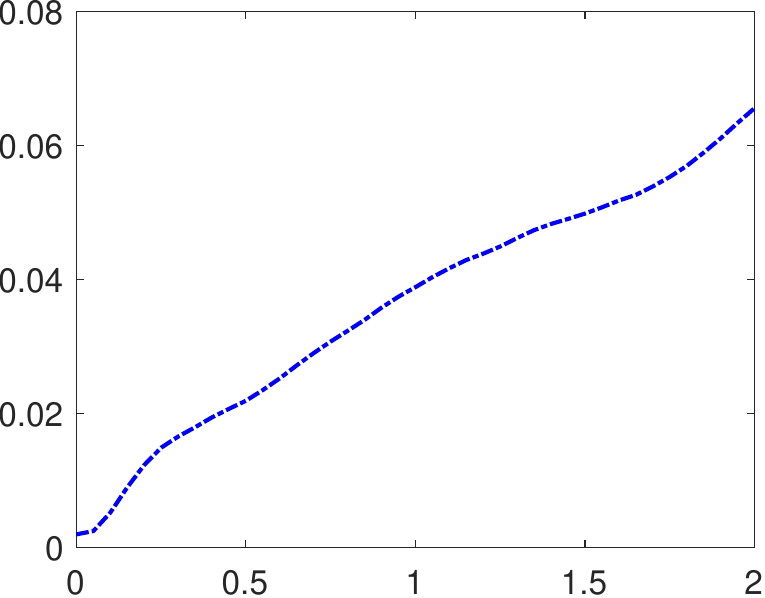}
	\caption{Time evolution of the total  entropy  $\mathcal{S}(t)$ (left) and the  entropy defect $D_{\rm Ent}(t)$(right), $t \in [0,2]$.}
	\label{fig4}
\end{figure}

As non-smooth  solutions of the Euler system are non-unique in general,   we may identify suitable selection criteria. One possibility is to choose a relevant solution by  maximizing the entropy production and  minimizing the defects. Table~\ref{tab1} demonstrates that these selection criteria consistently choose  the same DMV solution: the solution computed by the first-order VFV method, cf.~$\int_0^T \mathcal{S}(t) \dt$, $\int_0^T D_{\rm E}(t) \dt$, and $\int_0^T  D_{\rm Ent} (t) \dt$.

\begin{table}[htbp]
	\centering
	\renewcommand{\arraystretch}{1.5}
	\caption{{Energy defect and entropy production for different VFV methods at $T=2.0$.}}\label{tab1}
	\setlength{\tabcolsep}{1mm}{
		\begin{tabular}{|c|c|c|c|c|c|}
			\hline
			&  first-order & second-order & third-order & fifth-order &seventh-order \\
			\hline
			$\int_0^T \mathcal{E}_1(t) \dt$& 12.8758 &12.8753 &12.8754 &12.8746 &12.8747 \\
			$\int_0^T \mathcal{E}_2(t) \dt$  &12.8495 &12.7825 &12.7745 &12.7515 &12.7508 \\
			$\int_0^T D_{\rm E}(t) \dt$  &0.0264 &0.0929 &0.1009 &0.1232 &0.1239 \\
			$\int_0^T \mathcal{S}(t) \dt$  & 2.1046 &2.0905 &2.0878 &2.0913 &2.0919 \\
			$\int_0^T D_{\rm Ent}(t) \dt$  &  0.0144 & 0.0538 & 0.0587 & 0.0723 & 0.0724 \\
			\hline
	\end{tabular}}
\end{table}

\section{Conclusion}

Oscillatory sequences of consistent approximations obtained by entropy stable
numerical schemes generate DMV solutions of the Euler system. The limit never complies with Dafermos' maximal entropy production criterion. The result is true for both the
conservative-entropy variables $(\vr, \vm, S)$ and the conservative (energy preserving) variables $(\vr, \vm ,E)$. The same numerical scheme, however, may converge (strongly) to a weak solution of the Euler system for certain initial data.

The use of any criterion based on entropy maximization is therefore dubious at least in the class of DMV solutions generated by numerical schemes for certain initial data. A possible explanation of these phenomena may be formulated as the following

\noindent
{\bf Conjecture:}	

\noindent
{\it
	In the case a numerical scheme converges to a weak solution of the Euler system, the limit is a maximal computable DMV solution.
	In the case the same numerical scheme is oscillatory, the maximal (computable) DMV solution does not exist.
}

Note that for a scheme the property of being or not being oscillatory is determined by the choice of initial data. Oscillatory behaviour of some standard numerical schemes was illustrated in Section~\ref{NUM}.

%\bibliography{citace}

\begin{thebibliography}{10}
	
	\bibitem{BenSer}
	Benzoni-Gavage, S., Serre, D.: Multidimensional hyperbolic partial differential equations, First order systems and applications. Oxford Mathematical Monographs. The Clarendon Press Oxford University Press, Oxford (2007)
	
	\bibitem{BreFeiHof19C}
	Breit, D., Feireisl, E., Hofmanov\'{a}, M.: Dissipative solutions and semiflow selection for the complete Euler system. Comm. Math. Phys. \textbf{376}(2), 1471--1497 (2020)
	
	\bibitem{BreFei17A}
	B{\v r}ezina, J., Feireisl, E.: Measure-valued solutions to the complete Euler system revisited. Z. Angew. Math. Phys. \textbf{69}(3), 69:57 (2018)
	
	\bibitem{BuCLGS}
	Buckmaster, T., Cao-{L}abora, G., G{\' o}mez-{S}errano, J.: Smooth self-similar imploding profiles to 3D compressible Euler. Q. Appl. Math. \textbf{81}(3), 517--532 (2023)
	
	\bibitem{BreFei17}
	B\v{r}ezina, J., Feireisl, E.: Measure-valued solutions to the complete Euler system. J. Math. Soc. Japan \textbf{70}(4), 1227--1245 (2018)
	
	\bibitem{CLGSShiSta}
	Cao-{L}abora, G., G{\' o}mez-Serrano, J., Shi, J., Staffilani, G.: Non-radial implosion for compressible Euler and Navier-Stokes in $\mathbb{T}^3$ and $\mathbb{R}^3$. Camb. J. Math. \textbf{13}(4), 753--885 (2025)
	
	\bibitem{ChFe2023}
	Chiodaroli, E., Feireisl, E.: Glimm's method and density of wild data for the Euler system of gas dynamics. Nonlinearity \textbf{37}(3), Paper No. 035005, 12 (2024)
	
	\bibitem{ChiKre}
	Chiodaroli, E., Kreml, O.: On the energy dissipation rate of solutions to the compressible isentropic Euler system. Arch. Ration. Mech. Anal. \textbf{214}(3), 1019--1049 (2014)
	
	\bibitem{Kurganov}
	Chu, S., Kurganov, A., Xin, R.: New more efficient A-WENO schemes. J. Sci. Comput. \textbf{104}(2), Paper No. 53, 24 p (2025)
	
	\bibitem{CHKLY}
	Chu, S., Herty, M., Kurganov, A., Luk\'a\v{c}ov\'a-Medvi{\softd}ov\'a, M., Yu, C.: Numerical study of dissipative weak solutions for the Euler equations of gas dynamics. J. Pure Appl. Funct. Anal. (2026)
	
	\bibitem{Const1}
	Constantin, P.: Euler equations, Navier-Stokes equations and turbulence. In: Mathematical foundation of turbulent viscous flows, volume 1871 of Lecture Notes in Math., pages 1--43. Springer, Berlin, (2006)
	
	\bibitem{Dafer}
	Dafermos, C.~M.: The entropy rate admissibility criterion for solutions of hyperbolic conservation laws. J. Differ. Equations \textbf{14}, 202--212 (1973)
	
	\bibitem{D4a}
	Dafermos, C.~M.: Hyperbolic conservation laws in continuum physics, volume 325 of Grundlehren der Mathematischen Wissenschaften [Fundamental Principles of Mathematical Sciences]. Springer-Verlag, Berlin, fourth edition (2016)
	
	\bibitem{DelSze3}
	De~Lellis, C., Sz{\'e}kelyhidi, L., Jr.: On admissibility criteria for weak solutions of the Euler equations. Arch. Ration. Mech. Anal. \textbf{195}(1), 225--260 (2010)
	
	\bibitem{DiPMaj88}
	DiPerna, R.~J., Majda, A.: Reduced Hausdorff dimension and concentration-cancellation for two dimensional incompressible flow. J. Am. Math. Soc. \textbf{1}(1), 59--95 (1988)
	
	\bibitem{DiPMaj87}
	DiPerna, R.~J., Majda, A.~J.: Concentrations in regularizations for 2-d incompressible flow. Commun. Pure Appl. Math. \textbf{40}(3), 301--345 (1987)
	
	\bibitem{DiPMaj87a}
	DiPerna, R.~J., Majda, A.~J.: Oscillations and concentrations in weak solutions of the incompressible fluid equations. Comm. Math. Phys. \textbf{108}(4), 667--689 (1987)
	
	\bibitem{DiP2}
	DiPerna, R.~J.: Measure-valued solutions to conservation laws. Arch. Rat. Mech. Anal. \textbf{88}, 223--270 (1985)
	
	\bibitem{MarEd}
	Feireisl, E., Hofmanov\'{a}, M.: On convergence of approximate solutions to the compressible Euler system. Ann. PDE \textbf{6}(2), 11 (2020)
	
	\bibitem{FeiHof22}
	Feireisl, E., Hofmanov\'{a}, M.: Randomness in compressible fluid flows past an obstacle. J. Stat. Phys. \textbf{186}, 32 (2022)
	
	\bibitem{FeKlKrMa}
	Feireisl, E., Klingenberg, C., Kreml, O., Markfelder, S.: On oscillatory solutions to the complete Euler system. J. Differ. Equations \textbf{269}(2), 1521--1543 (2020)
	
	\bibitem{FeLMMiSh}
	Feireisl, E., Luk{\'a}{\v c}ov{\'a}-Medvi{\softd}ov{\'a}, H., Mizerov{\'a}, H., She, B.: Numerical analysis of compressible fluid flows. Springer-Verlag, Cham (2022)
	
	\bibitem{FeLMScSh}
	Feireisl, E., Luk\'a{\v c}ov\'a-Medvi{\softd}ov\'a, S., Schneider, S., She, B.: Approximating viscosity solutions of the Euler system. Math. Comp. \textbf{91}(337), 2129--2164 (2022)
	
	\bibitem{FeLmJu2025}
	Feireisl, E., Luk\'a\v{c}ov\'a-Medvi{\softd}ov\'{a}, M., J{\" u}ngel, A.: Maximal dissipation and well-posedness of the Euler system of gas dynamics. Arch. Ration. Mech. Anal. \textbf{250}(4), 53 (2026)
	
	\bibitem{FeiNovOpen}
	Feireisl, E., Novotn{\' y}: Mathematics of open fluid systems. Birkh{\" a}user-Verlag, Basel (2022)
	
	\bibitem{FjMiTa1}
	Fjordholm, U.~S., Mishra, S., Tadmor, E.: On the computation of measure-valued solutions. Acta Numer. \textbf{25}, 567--679 (2016)
	
	\bibitem{Klein2022}
	Klein, S.-C.: Using the Dafermos entropy rate criterion in numerical schemes. BIT \textbf{62}(4), 1673--1701 (2022)
	
	\bibitem{KlKrMaMa}
	Klingenberg, C., Kreml, O., M\'{a}cha, V., Markfelder, S.: Shocks make the Riemann problem for the full Euler system in multiple space dimensions ill-posed. Nonlinearity \textbf{33}(12), 6517--6540 (2020)
	
	\bibitem{KrZa}
	Kr{\"{o}}ner, D., Zajaczkowski, W.~M.: Measure-valued solutions of the Euler equations for ideal compressible polytropic fluids. Math. Methods Appl. Sci. \textbf{19}(3), 235--252 (1996)
	
	\bibitem{Mark2024}
	Markfelder, S.: A new convex integration approach for the compressible Euler equations and failure of the local maximal dissipation criterion. Nonlinearity \textbf{37}(11), Paper No. 115022, 60 (2024)
	
	\bibitem{MeRaRoSz}
	Merle, F., Rapha\"{e}l, P., Rodnianski, I., Szeftel, J.: On the implosion of a compressible fluid I: smooth self-similar inviscid profiles. Ann. of Math. (2) \textbf{196}(2), 567--778 (2022)
	
	\bibitem{MeRaRoSzbis}
	Merle, F., Rapha\"{e}l, P., Rodnianski, I., Szeftel, J.: On the implosion of a compressible fluid II: singularity formation. Ann. of Math. (2) \textbf{196}(2), 779--889 (2022)
	
	\bibitem{NLSeWi}
	Nussenzveig~Lopes, H.~J., Seis, C., Wiedemann, E.: On the vanishing viscosity limit for 2D incompressible flows with unbounded vorticity. Nonlinearity \textbf{34}(5), 3112--3121 (2021)
	
	\bibitem{SCHO1}
	Schochet, S.: The compressible Euler equations in a bounded domain: Existence of solutions and the incompressible limit. Commun. Math. Phys. \textbf{104}, 49--75 (1986)
	
	\bibitem{SMO}
	Smoller, J.: Shock waves and reaction-diffusion equations. Springer-Verlag, New York (1967)
	
	
\end{thebibliography}
%\bibliographystyle{plain}

\section*{Acknowledgement} We thank Shaoshuai Chu (Aachen) and Alexander Kurganov (Shenzhen) for providing  us a code for higher order A-WENO reconstruction and higher-order finite difference flux corrections.  M.L.-M. thanks Michael Herty (Aachen) for fruitful discussions on numerical simulations. 

\def\cprime{$'$} \def\ocirc#1{\ifmmode\setbox0=\hbox{$#1$}\dimen0=\ht0
	\advance\dimen0 by1pt\rlap{\hbox to\wd0{\hss\raise\dimen0
			\hbox{\hskip.2em$\scriptscriptstyle\circ$}\hss}}#1\else {\accent"17 #1}\fi}

\end{document}